\patchcmd{\thebibliography}{\leftmargin\labelwidth}{\leftmargin\labelwidth\addtolength\itemsep{-0.1\baselineskip}}{}{}
\newcommand*\samethanks[1][\value{footnote}]{\footnotemark[#1]}
\author{Miguel Benitez\thanks{Department of Mathematical Sciences, Carnegie Mellon University, Pittsburgh, PA 15213, USA\@.} \qquad Siran Chen\samethanks\qquad Tianhui Han\samethanks\\ R. Amzi Jeffs\samethanks\, \thanks{Supported by the National Science Foundation through Award No. 2103206.}  \qquad Kinapal Paguyo\samethanks[1] \qquad Kevin A. Zhou\samethanks[1]}
\title{
\vspace{-1em}Realizing convex codes with axis-parallel boxes}
\date{March 2023}
\newtheorem{theorem}{Theorem}
\newtheorem{lemma}[theorem]{Lemma}
\newtheorem{corollary}[theorem]{Corollary}
\newtheorem{proposition}[theorem]{Proposition}
\theoremstyle{remark}
\newtheorem{definition}[theorem]{Definition}
\newtheorem{example}[theorem]{Example}
\newtheorem{remark}[theorem]{Remark}
\newcommand*{\eqdef}{\stackrel{\mbox{\normalfont\tiny def}}{=}}  % definition by equality 
\newcommand*{\R}{\mathbb{R}}                                     % Real numbers
\newcommand*{\C}{\mathcal{C}}                                     % A code C
\newcommand*{\D}{\mathcal{D}}                                     % A code D
\newcommand*{\U}{\mathcal{U}}                                     % A realization
\newcommand*{\V}{\mathcal{V}}                                     % Another realization
\newcommand*{\W}{\mathcal{W}}                                     % Another realization
\newcommand*{\I}{\mathcal{I}}                                     % Intervals
\newcommand*{\J}{\mathcal{J}}                                     % More intervals
\DeclareMathOperator{\code}{code}                                 % Code of a collection of sets
\DeclareMathOperator{\odim}{odim}                                 % Open embedding dimension of a code
\DeclareMathOperator{\cdim}{cdim}                                 % Closed embedding dimension of a code
\DeclareMathOperator{\bdim}{bdim}                                 % Box embedding dimension of a code
\DeclareMathOperator{\patt}{patt}                                 % Intersection pattern of a realization at a point                                    
 \newcommand*{\intprod}{\hspace{0.15em}\includegraphics{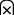}\hspace{0.15em}}
\begin{document}
\maketitle
\vspace{-2.5em}
\begin{abstract}
    \vspace{-0.5em} Every ordered collection of sets in Euclidean space can be associated to a combinatorial code, which records the regions cut out by the sets in space. 
    Given two ordered collections of sets, one can form a third collection in which the $i$-th set is the Cartesian product of the corresponding sets from the original collections.
    We prove a general ``product theorem" which characterizes the code associated to the collection resulting from this operation, in terms of the codes associated to the original collections. 
    We use this theorem to characterize the codes realizable by axis-parallel boxes, and exhibit differences between this class of codes and those realizable by convex open or closed sets.
    We also use our theorem to prove that a ``monotonicity of open convexity" result of Cruz, Giusti, Itskov, and Kronholm holds for closed sets when some assumptions are slightly weakened. 
\end{abstract}

\vspace{-1em}\section{Introduction} \label{sec:introduction}\vspace{-0.8em}

Given a collection $\U = \{U_1, \ldots, U_n\}$ of sets in $\R^d$, one can record how they intersect and cover each other using a combinatorial code on the index set $[n]$, defined as follows:
\begin{align*}
\code(\U) &\eqdef %\Big\{\sigma\subseteq [n] \, \Big\vert\, \bigcap_{i\in\sigma} U_i \setminus \bigcup_{j\in[n]\setminus \sigma} U_j \neq \emptyset \Big\}\\
\{\sigma\subseteq [n]\, \vert\,  \text{there exists $p\in \R^d$ with $p\in U_i$ if and only if $i\in\sigma$}\}. 
\end{align*}
We say that $\U$ is a \emph{realization} of $\code(\U)$, and elements of a code are called \emph{codewords}.
A codeword is \emph{maximal} if it is not properly contained in any other codeword. 

For notational convenience, we define the \emph{intersection pattern} of $\U$ at a point $p$ to be the set $\patt_\U(p) \eqdef \{i\in[n] \mid p\in U_i\}$.
For $\sigma\subseteq [n]$ the \emph{atom} of $\sigma$ in $\U$ is $\U^\sigma \eqdef \{p\in \R^d \mid \patt_\U(p) = \sigma\}$. 
With this notation, $\code(\U)$ consists of the sets $\sigma\subseteq [n]$ with nonempty atoms.
\Cref{fig:intro-example} shows a realization of a code by four axis-parallel rectangles in $\R^2$. 
Throughout, we write maximal codewords in bold.

\begin{figure}[h]
    \[
   \includegraphics{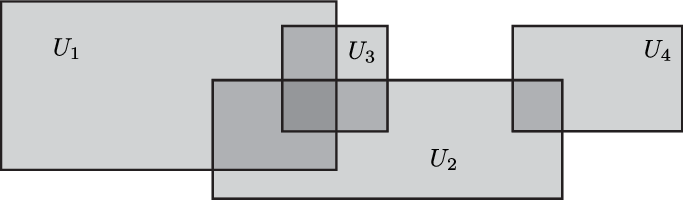}
    \]
    \caption{Four axis-parallel rectangles in $\R^2$ which realize the code $\{\mathbf{123}, \mathbf{24}, 12, 13, 23, 1, 2, 3, 4, \emptyset\}$.}
    \label{fig:intro-example}
\end{figure}

\paragraph{Convex codes.}
Given a collection of sets, it is straightforward to compute its associated code.
The inverse problem---finding a collection of sets that realize a given code---may be complicated, especially when geometric constraints are placed on the sets in the realization.
In 2013, Curto, Itskov, Veliz\nobreakdash-Cuba, and Youngs \cite{CIVY} initiated the study of \emph{open convex codes}, which are codes with realizations consisting of convex open sets.
One may also consider \emph{closed convex codes}, which have realizations by closed convex sets. 
Curto, Itskov, Veliz-Cuba, and Youngs were motivated by neuroscientific phenomena, and the study of open and closed convex codes additionally generalizes the more classical study of $d$-representable complexes (see Tancer's survey \cite{tancer}).
Since its inception the study of open and closed convex codes has thus led to rich results of purely mathematical interest, such as new discrete geometry theorems, families of extremal examples, and more. 

\begin{remark}
Consistent with the convex codes literature, we only consider bounded sets in $\R^d$. This is equivalent to considering codes that contain $\emptyset$ as a codeword (if any sets are unbounded, we may intersect them with a large open or closed ball to obtain a bounded realization of the same code). 
\end{remark}

Beyond determining whether a code has an open or closed convex realization, it is natural to seek  realizations that are minimal in dimension. 
To this end, it is useful to define the \emph{open} (respectively, \emph{closed}) \emph{embedding dimension} of a code.
These are
\begin{align*}
    \odim(\C) &\eqdef \min\{d\mid \text{$\C$ has a realization by open convex sets in $\R^d$}\}, \text{ and}\\
    \cdim(\C) &\eqdef \min\{d\mid \text{$\C$ has a realization by closed convex sets in $\R^d$}\}.
\end{align*}
By convention, the minimum over the empty set is $\infty$. 

A complete classification of open or closed convex codes and their embedding dimensions is currently out of reach, and an efficient classification seems to be out of the question: results of Kunin, Lienkaemper, and Rosen \cite{KLR} posit that recognizing open convex codes in $\R^2$ is at least NP-hard. 
A further difficulty is the difference between open convex and closed convex realizations---
Lienkaemper, Shiu, and Woodstock \cite{LSW} described a code which has a closed convex realization, but no open convex realization, and
Cruz, Giusti, Itskov, and Kronholm \cite{CGIK} described a code with the opposite behavior.
Families of codes which exhibit arbitrary disparities between closed and open embedding dimension were given more recently by Jeffs \cite{jeffs_embedding_vectors}, but general criteria for understanding and classifying these differences are lacking. 

\paragraph{Box codes.}
Given the difficulties of classifying open and closed convex codes, we tackle the simpler problem of which codes can be realized by axis-parallel boxes. 
An \emph{axis-parallel box} in $\R^d$ is either the empty set, or the Cartesian product of nonempty intervals $[a_1, b_1]\times [a_2,b_2]\times \cdots \times [a_d,b_d]$. 
For brevity, we use the term \emph{box} from here on.
Note that we are considering closed boxes, and we are allowing for boxes that are not full-dimensional---in particular, a point is a box. 
 We say that a code is a \emph{box code} or \emph{box-convex} if it has a realization in $\R^d$ in which every set is an axis-parallel box.
Lastly, we define the \emph{box embedding dimension} of a code $\C$ to be
\[
   \bdim(\C) \eqdef \min\{d\mid \text{$\C$ has a realization in $\R^d$ in which each set is a box}\}.
\]
Note immediately that $\cdim(\C)\le \bdim(\C)$, as every box is a closed convex set. 
Later results will also imply that $\odim(\C)\le \bdim(\C)$ (see \Cref{thm:integercoordinates}).  
\paragraph{Our results.}
Our main result is \Cref{thm:product}, which applies to any collections of sets, but proves particularly useful for analyzing box-convex codes and their embedding dimensions. 
To state the result we first require a definition.

\begin{definition}
The \emph{intersection product} of two codes $\C\subseteq 2^{[n]}$ and $\D\subseteq 2^{[n]}$ is the code
\[
    \C \intprod \D \eqdef \{ c_1 \cap c_2 \mid c_1\in \C \text{ and } c_2\in \D\}.
\]
\end{definition}

\begin{theorem}[Product theorem]\label{thm:product}
Let $\U = \{U_1, \ldots, U_n\}$ and $\V = \{V_1, \ldots, V_n\}$ be collections of (not necessarily open, closed, or convex) sets in $\R^{d_1}$ and $\R^{d_2}$ respectively.
Define $\W \eqdef \{W_1,\ldots, W_n\}$ where $W_i \eqdef U_i\times V_i$. 
Then
\[
\code(\W) = \code(\U) \intprod \code(\V). 
\]
\end{theorem}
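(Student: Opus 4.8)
The plan is to reduce the statement to one elementary observation about how membership in a Cartesian product decomposes across the two coordinate blocks. Identify $\R^{d_1}\times \R^{d_2}$ with $\R^{d_1+d_2}$, so that a point of the ambient space for $\W$ is a pair $(p,q)$ with $p\in\R^{d_1}$ and $q\in\R^{d_2}$. The first step is the remark that $(p,q)\in W_i = U_i\times V_i$ if and only if $p\in U_i$ and $q\in V_i$. Running this over all $i\in[n]$ immediately yields the key identity
\[
\patt_\W(p,q) = \patt_\U(p)\cap\patt_\V(q).
\]

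From here the theorem follows by chasing definitions, and I would prove the two inclusions separately. For $\code(\W)\subseteq \code(\U)\intprod\code(\V)$: take $\sigma\in\code(\W)$, pick a witnessing point $(p,q)$ with $\patt_\W(p,q)=\sigma$, and note that $\patt_\U(p)\in\code(\U)$ and $\patt_\V(q)\in\code(\V)$ (each pattern at a point is a codeword, since that point lies in the corresponding atom), while the identity above gives $\sigma = \patt_\U(p)\cap\patt_\V(q)$; hence $\sigma$ lies in the intersection product. Conversely, given $c_1\in\code(\U)$ and $c_2\in\code(\V)$, choose $p$ with $\patt_\U(p)=c_1$ and $q$ with $\patt_\V(q)=c_2$; then $(p,q)$ satisfies $\patt_\W(p,q)=c_1\cap c_2$, so $c_1\cap c_2\in\code(\W)$. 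Since every element of $\code(\U)\intprod\code(\V)$ has the form $c_1\cap c_2$, this gives the reverse inclusion and hence equality.

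I do not anticipate a serious obstacle: once the coordinatewise membership observation is recorded, both directions are a routine unwinding of the definitions of $\code$, $\patt_\U(\cdot)$, and the intersection product. The only matters needing a little care are the identification of the product space with $\R^{d_1+d_2}$ and checking that the empty set behaves correctly on both sides (for instance when some $U_i$ or $V_i$ is empty, or when $\sigma=\emptyset$), but these are automatic from the convention that $U_i\times V_i$ is empty whenever either factor is, together with the standing assumption that all codes contain $\emptyset$. The real work of the paper is not in this proof but in the applications of the identity to box codes and their embedding dimensions.
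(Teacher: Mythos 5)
Your proposal is correct and follows essentially the same route as the paper: both arguments reduce the theorem to the pointwise identity $\patt_\W(p,q) = \patt_\U(p)\cap\patt_\V(q)$ (the paper phrases this as $\patt_\W(p,q)=\sigma\cap\tau$ for $(p,q)$ in the product atom $\U^\sigma\times\V^\tau$) and then deduce both inclusions between $\code(\W)$ and $\code(\U)\intprod\code(\V)$. No gaps to flag.
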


Since boxes can be decomposed as Cartesian products of intervals, \Cref{thm:product} will let us reduce the study of box-convex codes to the study of \emph{interval codes} which have realizations in $\R^1$ in which every set is a closed interval.
We give two immediate corollaries of \Cref{thm:product} below.

%Recall that products of open sets are open, and similarly for closed and convex sets. 
%Likewise, a product of boxes is again a box.
%Hence we have the following immediate corollaries. 

\begin{corollary}\label{cor:dimensions}
For any codes $\C\subseteq 2^{[n]}$ and $\D\subseteq 2^{[n]}$, we have $\odim(\C\intprod \D) \le \odim(\C) + \odim(\D)$.
The analogous inequalities hold for $\cdim(\C\intprod \D)$ and $\bdim(\C\intprod \D)$. 
\end{corollary}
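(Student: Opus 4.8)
The plan is to obtain all three inequalities as immediate consequences of \Cref{thm:product}, using only the elementary fact that open sets, closed sets, convex sets, and boxes are each preserved under Cartesian products. I would first dispose of the degenerate case: if $\odim(\C) = \infty$ or $\odim(\D) = \infty$ then the right-hand side is $\infty$ and the bound is vacuous, so assume both quantities are finite and write $d_1 \eqdef \odim(\C)$ and $d_2 \eqdef \odim(\D)$.

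Next, by the definition of $\odim$, I would fix a realization $\U = \{U_1,\ldots,U_n\}$ of $\C$ by open convex sets in $\R^{d_1}$ and a realization $\V = \{V_1,\ldots,V_n\}$ of $\D$ by open convex sets in $\R^{d_2}$, and set $W_i \eqdef U_i \times V_i \subseteq \R^{d_1+d_2}$ and $\W \eqdef \{W_1,\ldots,W_n\}$. Each $W_i$ is a product of open convex sets, hence is open (the product topology on $\R^{d_1}\times \R^{d_2}$ is the usual topology on $\R^{d_1+d_2}$) and convex. By \Cref{thm:product} we have $\code(\W) = \code(\U)\intprod\code(\V) = \C\intprod\D$, so $\W$ is an open convex realization of $\C\intprod\D$ in $\R^{d_1+d_2}$, which yields $\odim(\C\intprod\D)\le d_1+d_2 = \odim(\C)+\odim(\D)$.

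The inequalities for $\cdim$ and $\bdim$ follow by repeating this argument verbatim with ``open convex'' replaced by ``closed convex'' and by ``box'': a product of closed sets is closed and a product of convex sets is convex, while a product of two boxes is again a box (if either factor is empty, the product is empty; otherwise both factors are products of intervals, so their product is again a product of intervals, i.e.\ a box). Since the substantive work is carried entirely by \Cref{thm:product}, I do not expect any genuine obstacle here; the only point to keep straight is that each of the three set classes is closed under the product operation, which is routine to verify in each case.
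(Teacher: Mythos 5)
Your proposal is correct and follows essentially the same route as the paper, which treats this as an immediate consequence of \Cref{thm:product} together with the fact that products of open (respectively closed, convex, box) sets remain open (respectively closed, convex, boxes). Your explicit handling of the infinite-dimension case and the verification that each class is closed under Cartesian products simply spells out details the paper leaves implicit.
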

\begin{proof}
Note that if $U_i$ and $V_i$ are both open and convex, then so is $U_i \times V_i$. 
The same is true when the sets are closed and convex, or when they are boxes. 
Therefore, if $\U$ is an open (respectively, closed or box) convex realization of $\C$ in $\mathbb{R}^{d_1}$ and $\V$ is an open (respectively, closed or box) convex realization of $\D$ in $\R^{d_2}$, then the collection $\W$ of \Cref{thm:product} is a realization of $\C\intprod \D$ in $\R^{d_1+d_2}$ of the same type. 
%Note that for any two codes $U_i$ and $V_i$, $U_i \times V_i$ maintains convexity. 
%That is to say, if $U_i$ and $V_i$ are open convex sets, then $U_i \times V_i$ is still open convex. 
%The same argument applies for when the sets are closed convex. 
%Therefore, if $U_i \in \mathbb{R}^m$ and $U_j \in \mathbb{R}^n$, then $U_i \times V_i \in \mathbb{R}^{m+n}$ where the type of convexity is maintained. 
\end{proof}

\begin{corollary}\label{cor:classification}
Let $\C\subseteq 2^{[n]}$ be a code.
Then $\C$ is box-convex in $\R^d$ if and only if $\C$ is the intersection product of $d$-many interval codes. 
\end{corollary}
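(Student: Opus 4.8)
The plan is to deduce the corollary from a single auxiliary observation, namely that \Cref{thm:product} iterates. First I would record that the intersection product is associative, since $(c_1\cap c_2)\cap c_3 = c_1\cap(c_2\cap c_3)$; hence for codes $\C_1,\ldots,\C_d\subseteq 2^{[n]}$ the expression $\C_1\intprod\cdots\intprod\C_d$ is unambiguous. Next, given collections $\I_1,\ldots,\I_d$ of subsets of $\R$, each indexed by $[n]$, say $\I_j = \{I_{1,j},\ldots,I_{n,j}\}$, I would set $W_i \eqdef I_{i,1}\times\cdots\times I_{i,d}\subseteq\R^d$ and $\W\eqdef\{W_1,\ldots,W_n\}$, and prove the claim that $\code(\W) = \code(\I_1)\intprod\cdots\intprod\code(\I_d)$ by induction on $d$. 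The base case $d=1$ is trivial; for the inductive step I would group $W_i = I_{i,1}\times(I_{i,2}\times\cdots\times I_{i,d})$, apply \Cref{thm:product} with $U_i = I_{i,1}\subseteq\R^1$ and $V_i = I_{i,2}\times\cdots\times I_{i,d}\subseteq\R^{d-1}$, and then invoke the inductive hypothesis on $\{V_1,\ldots,V_n\}$.

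Granting this claim, both implications are short. For the forward direction, suppose $\C$ has a realization $\W = \{W_1,\ldots,W_n\}$ by boxes in $\R^d$. Each nonempty $W_i$ is by definition a product $I_{i,1}\times\cdots\times I_{i,d}$ of $d$ nonempty closed intervals, and for an empty $W_i$ I would instead write $W_i = \emptyset\times I_{i,2}\times\cdots\times I_{i,d}$ with $I_{i,1}=\emptyset$ and the remaining factors arbitrary closed intervals. Setting $\I_j\eqdef\{I_{1,j},\ldots,I_{n,j}\}$, each $\code(\I_j)$ is an interval code, and the auxiliary claim yields $\C = \code(\W) = \code(\I_1)\intprod\cdots\intprod\code(\I_d)$. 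Conversely, if $\C = \C_1\intprod\cdots\intprod\C_d$ with each $\C_j$ an interval code, I would pick a realization $\I_j = \{I_{1,j},\ldots,I_{n,j}\}$ of $\C_j$ by closed intervals, form the boxes $W_i\eqdef I_{i,1}\times\cdots\times I_{i,d}$ in $\R^d$, and apply the auxiliary claim to get $\code(\{W_1,\ldots,W_n\}) = \C_1\intprod\cdots\intprod\C_d = \C$, exhibiting $\C$ as box-convex in $\R^d$.

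There is no deep obstacle here: the entire content is carried by \Cref{thm:product}, and the work is the bookkeeping of the induction. The one point requiring care is the empty set. A box can be empty while a genuine interval is nonempty, so I would fix (consistent with the paper's restriction to bounded realizations and codes containing $\emptyset$) the convention that realizations of interval codes may include the empty set as a degenerate closed interval; this is exactly what lets the empty-box case above go through and keeps the two directions symmetric. With that convention in place, the only thing to double-check is that the regrouping $\R^{d} \cong \R^{1}\times\R^{d-1}$ used in the induction matches the hypotheses of \Cref{thm:product} on the nose, which it does.
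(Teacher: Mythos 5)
Your proof is correct and matches the paper's intended argument: the paper treats this as an immediate consequence of \Cref{thm:product}, obtained exactly as you do by decomposing boxes coordinate-wise into intervals and iterating the product theorem (the same iteration appears explicitly in the proof of \Cref{thm:integercoordinates}). Your attention to the empty box, handled by allowing the empty set as a degenerate interval, is a reasonable convention consistent with the paper's definitions and closes the only small bookkeeping issue.
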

\begin{proof}
Let $\mathcal B = \{B_1, B_2, \ldots , B_n \}$ be a realization of $\C$ by boxes in $\R^d$. 
Each set $B_i$ can represented as a Cartesian product of $d$-many intervals: $B_i = I_{i}^{(1)} \times I_{i}^{(2)} \times \cdots \times I_{i}^{(d)} $. 
By \Cref{thm:product}, we know that $\code(\mathcal B) =  \code(\mathcal I^{(1)}) \intprod \code(\mathcal I^{(2)}) \intprod \cdots \intprod \code(\mathcal I^{(d)})$ where $\mathcal I^{(j)} = \{I_1^{(j)}, I_2^{(j)}, \ldots, I_n^{(j)}\}$. 
Therefore, we can conclude that $\C$ is the intersection product of $d$-many interval codes.
The converse follows similarly from \Cref{thm:product}, as one may form a realization of $\C$ by taking Cartesian products of intervals that realize the interval codes whose intersection product is $\C$. 
\end{proof}

Note that a decomposition of a code $\C$ as an intersection product of interval codes need not be unique. 
An important feature of \Cref{cor:classification} is that it yields a brute-force algorithm for recognizing whether or not a code is box-convex in $\R^d$.
First, we enumerate all interval codes on $[n]$, which is possible because each such code is determined by the relative order of the $2n$ endpoints of the intervals in question (we explain this fact formally in \Cref{sec:integer-coords}). 
We then take all possible $d$-fold intersection products of these interval codes, and check whether or not $\C$ arises in this process. 

Although the algorithm described above is far from efficient, this situation already stands in contrast to open and closed convex codes---Bukh and Jeffs \cite{BJ22a} recently proved that open and closed convex codes realizable in $\R^2$ can be recognized algorithmically, but no such decidability result is known in dimension three or higher. 
It remains an open question whether or not there is an \emph{efficient} algorithm to recognize box-convex codes in $\R^d$. 

The remainder of the paper is devoted to proving \Cref{thm:product} in \Cref{sec:productproof}, and then exploring a variety of applications.
In \Cref{sec:small-codes} we classify all codes $\C\subseteq 2^{\{1,2,3\}}$ which are convex, but not box-convex---it turns out that there are only three such codes up to symmetry. 
We use one of these codes to show that a ``monotonicity of open convexity" result proved by Cruz, Giusti, Itskov, and Kronholm \cite{CGIK} does not hold for box-convexity. 
In \Cref{sec:embedding-dimension-gap} we describe a family of codes whose box embedding dimension ranges over all integer values greater than or equal to two, while the open and closed embedding dimensions remain fixed and equal to two. 
We prove in \Cref{sec:integer-coords} that codes $\C\subseteq 2^{[n]}$ realizable by boxes in $\R^d$ always have realizations in which the corners of the boxes have integer coordinates between 1 and $2n$, and additionally the interiors of the boxes realize the same code.
In particular, there is no difference between open and closed box codes. 
Finally, \Cref{sec:weak-monotonicity} applies \Cref{thm:product} beyond box codes by showing that a weak analog of the aforementioned ``monotonicity of open convexity" result holds for closed convex codes. 
The analog we prove is necessarily weak, as Gambacini, Jeffs, Macdonald, and Shiu \cite{GJMS} proved that the strong analog does not hold.

%%%%%%%%%%%%%%%%%%%%%%%%%%%%%%%%%%%%%%%%%%%%%%%%%%%%%%%%%%%%%%%%%%%%%%%%%%%%%%%%
%%%%%%%%%%%%%%%%%%%%%%%%%%%%%%%%%%%%%%%%%%%%%%%%%%%%%%%%%%%%%%%%%%%%%%%%%%%%%%%%
%%%%%%%%%%%%%%%%%%%%%%%%%%%%%%%%%%%%%%%%%%%%%%%%%%%%%%%%%%%%%%%%%%%%%%%%%%%%%%%%

\section{Proof and examples of the product theorem (\texorpdfstring{\Cref{thm:product}}{Theorem 3})}\label{sec:productproof}

We start by proving \Cref{thm:product}, and then give two examples.
Let $\U$, $\V$, and $\W$ be as in the statement of \Cref{thm:product}. 
Observe that the nonempty atoms $\U^\sigma$ arising in the realization $\U$ cover all of $\R^{d_1}$, and likewise the nonempty atoms of the form $\V^\tau$ cover $\R^{d_2}$.
Therefore, all the Cartesian products $\U^\sigma \times \V^\tau$ as $\sigma$ and $\tau$ range over $\code(\U)$ and $\code(\V)$ respectively cover the entirety of $\R^{d_1+d_2}$. 
Fix codewords $\sigma \in \code(\U)$ and $\tau \in \code(\V)$.
Each point in $\U^\sigma\times \V^\tau$ can be written as a pair $(p,q)$ where $p\in \U^\sigma$ and $q\in \V^\tau$. 
We claim that $\patt_\W(p,q) = \sigma\cap \tau$. 

\paragraph{The inclusion $\patt_{\W}(p,q) \subseteq \sigma \cap \tau$.}
Let $k \in  \patt_{\W}(p,q)$.
Then $(p,q)\in W_k = U_k\times V_k$, which implies that $p\in U_k$ and $q\in V_k$. 
This in turn implies that $k\in \patt_\U(p) = \sigma$, and $k\in \patt_\V(q) = \tau$.
Thus $k\in \sigma\cap \tau$. 

\paragraph{The inclusion $\patt_{\W}(p,q) \supseteq \sigma \cap \tau$.} 
Let $k \in \sigma \cap \tau$.
Then $p\in U_k$ and $q\in V_k$.
Thus $(p,q)\in U_k\times V_k = W_k$, and so $k\in \patt_\W(p,q)$, as desired.

\paragraph{Summary.} 
Since $\patt_\W(p,q) = \sigma\cap \tau$ for all $(p,q)\in \U^\sigma\times\V^\tau$, we see that the codewords of $\code(\W)$ are exactly those of the form $\sigma\cap \tau$, as $\sigma$ ranges over $\code(\U)$ and $\tau$ ranges over $\code(\V)$.
This is equivalent to the statement that $\code(\W) = \code(\U) \intprod \code(\V)$, proving the result. \hfill \qed

%%%%%%%%%%%%%%% EXAMPLES

\begin{figure}[h]
    \[
    \includegraphics{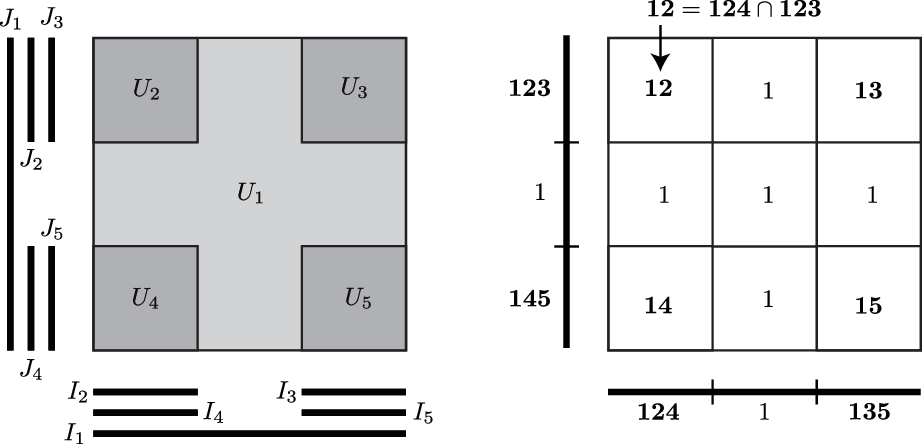}
    \]
    \caption{(Left) A realization of a box code in the plane, and its decomposition into interval codes in each coordinate direction. (Right) The atoms in the interval codes, and their products in the plane.}
    \label{fig:product-example-1}
\end{figure}

\begin{example} 
The left side of \Cref{fig:product-example-1} shows five boxes $\U = \{U_1, U_2, U_3, U_4, U_5\}$ in the plane, and their corresponding decompositions into intervals $\I = \{I_1, I_2, I_3, I_4, I_5\}$ and $\J = \{J_1, J_2,J_3,J_4,J_5\}$ in the $x$-direction and the $y$-direction respectively. 
We have $\code(\U) = \{\mathbf{12}, \mathbf{13}, \mathbf{14}, \mathbf{15}, 1, \emptyset\}$, which is the intersection product of $\code(\I) = \{\mathbf{124}, \mathbf{135}, 1, \emptyset\}$, and $\code(\J) = \{\mathbf{123}, \mathbf{145}, 1, \emptyset\}$, as guaranteed by \Cref{thm:product}. 
The right side of the figure shows the atoms of nonempty codewords in the $x$-direction and $y$-direction, and the Cartesian products of these atoms, which we used in the proof of \Cref{thm:product}.
For example, the top left square  $\I^{124} \times \J^{123}$ is contained in the atom $\U^{12}$ since $12 = 124\cap 123$.
\end{example}

\begin{example}
Recall the box-convex code $\C = \{\mathbf{123}, \mathbf{24}, 12, 13, 23, 1, 2, 3, 4, \emptyset\}$ from \Cref{fig:intro-example}. 
We can decompose each box in this realization as a product of two intervals. 
Letting $\I = \{I_1,I_2,I_3,I_4\}$ be the intervals in the $x$-direction and $\J=\{J_1,J_2,J_3,J_4\}$ be the intervals in the $y$-direction, we then have $\C = \code(\I)\intprod\code(\J)$ by \Cref{thm:product}.
It turns out that $\code(\I) = \{\mathbf{123}, \mathbf{24}, 12, 23, 1, 2, 4, \emptyset\}$ and $\code(\J) = \{\mathbf{1234}, 134, 12, 1, 2, \emptyset\}$ in this case, and \Cref{fig:decomp} shows this decomposition.
\begin{figure}[h]
    \[
    \includegraphics{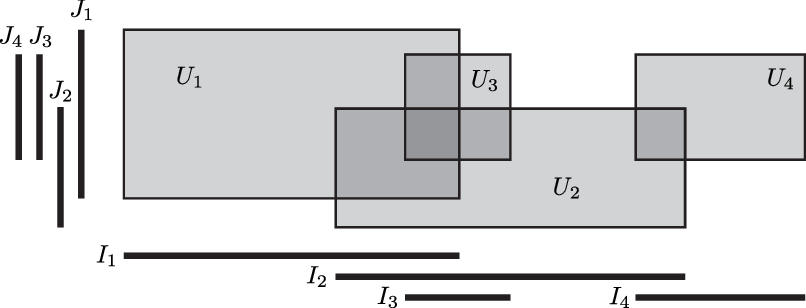}
    \]
    \caption{The boxes from \Cref{fig:intro-example}, decomposed into intervals in each coordinate direction.}
    \label{fig:decomp}
\end{figure}
\end{example}

%%%%%%%%%%%%%%%%%%%%%%%%%%%%%%%%%%%%%%%%%%%%%%%%%%%%%%%%%%%%%%%%%%%%%%%%%%%%%%%%
%%%%%%%%%%%%%%%%%%%%%%%%%%%%%%%%%%%%%%%%%%%%%%%%%%%%%%%%%%%%%%%%%%%%%%%%%%%%%%%%
%%%%%%%%%%%%%%%%%%%%%%%%%%%%%%%%%%%%%%%%%%%%%%%%%%%%%%%%%%%%%%%%%%%%%%%%%%%%%%%%

\section{Convex codes that are not box-convex}\label{sec:small-codes}

Every box-convex code is also convex, but the converse fails even for codes on three indices.
Understanding the degree to which the converse fails helps us understand the difference between convex codes and box codes, and so we use \Cref{cor:classification} to characterize all codes on the index set $\{1,2,3\}$ that are convex, but not box-convex. 
There are three such codes, up to permuting the indices:
\begin{align*}
    \C_1 & = \{\mathbf{123}, 12, 13, 23, \emptyset\},\\
    \C_2 & = \{\mathbf{123}, 12, 13, 23,1, \emptyset\}, \text{ and}\\
    \C_3 & = \{\mathbf{12}, \mathbf{13}, \mathbf{23}, 1,2,3, \emptyset\}.
\end{align*}
The codes $\C_1$ and $\C_2$ are closed and open convex in $\R^2$ since they have only one maximal codeword (see \cite[Lemma 2.5]{local15}) and $\C_3$ is closed and open convex in $\R^2$ using three sets that pairwise intersect arranged in a triangular loop.
Arguing that all other convex codes on $\{1, 2, 3\}$  are box-convex requires us to treat a large number of cases by hand, which we do in an appendix following the bibliography. 
In this section, we will establish that the codes $\C_1, \C_2$, and $\C_3$ are not box-convex, and also give an interesting example of such a code on four indices (\Cref{thm:flaps}). 

The reason that $\C_1$ and $\C_3$ are not box-convex is that they have the following properties:
\begin{enumerate}
    \item[(i)] For $i=1,3$, the code $\C_i$ is not an interval code, and 
    \item[(ii)] For $i=1,3$, if $\C_i = \C\intprod \D$ where $\C$ and $\D$ are convex codes, then $\C = \C_i$ or $\D = \C_i$.
\end{enumerate}
Indeed, any code with these two properties is not box-convex---the first condition prevents it from being realized in $\R^1$, and the second condition means that it cannot be decomposed as an intersection product of interval codes since one of the factors would be the code itself. 
The code $\C_2$ requires a very slightly different argument, which we outline after our proof that $\C_1$ is not box-convex.\\
Intuitively, the second condition above alludes to a sense of irreducibility of box codes. 
\begin{definition}
    A code $\C$ is \emph{irreducible} if, whenever we can write $\C = \D\intprod \mathcal E$ for convex codes $\D$ and $\mathcal E$, we must have $\C = \D$ or $\C = \mathcal E$.
\end{definition}
\noindent Then, the above two conditions could be generalized into the following lemma: 
\begin{lemma}\label{lem:irreducible}
If a code $\C$ is irreducible and is not an interval code, then $\C$ is not box-convex. 
\end{lemma}
\begin{proof}
    %Let $\C$ be an irreducible code not realizable in $\R^1$. 
    Assume for the sake of contradiction that $\C$ is box-convex in $\R^d$. 
    Then, by \Cref{cor:classification}, $\C$ is the intersection product of $d$-many interval codes.
    Because $\C$ is irreducible, one of these interval codes must be $\C $ itself.
    However, this would imply that $\C$ is realizable in $\R^1$, violating our assumption that $\C$ is not an interval code. 
%    If a code $\C$ is irreducible and not realizable in $\R^1$, then any decomposition of the code into interval codes would have to contain the code itself. Thus, the code is not box-convex in any dimension. 
\end{proof}
Before moving on to our analysis of $\C_1$, $\C_2$, and $\C_3$, we establish a simple criteria that obstructs a code from being realized by intervals. 
\begin{lemma}\label{lem:notintervalcode}
Let $\C$ be a code with codewords $\sigma, \tau_1, \tau_2, \tau_3$ such that $\tau_i \subset \sigma$ and $\tau_i \not\subset \tau_j$ for all $i\neq j$, then $\C$ is not an interval code.
\end{lemma}
        
\begin{proof}

    Suppose for contradiction $\C$ is an interval code, with realization $\I$ in $\R^1$.
    Let $p$ be a point in the atom $\I^{\sigma}$. Let $q_i$ be a point in $\I^{\tau_i}$ for $1 \leq i \leq 3$. By pigeonhole principle, two of the $q_i$'s are on the same side of $p$. Without loss of generality, let $q_1 < q_2 < p$, so $q_1, q_2$ are both on the left side of $p$. 
    
    Since $\tau_1 \not\subset \tau_2$, there must be an index $j \in \tau_1$ such that $q_2 \not\in U_j$. 
    However, $q_1$ and $p$ are in $I_j$, and since $I_j$ is an interval, that means $q_2 \in I_j$ as well, a contradiction. 
    The figure below gives an illustration of this situation. 
\end{proof}
\begin{center}
            \begin{tikzpicture}
               %\node at (1.5, 0.5) {$\sigma$} ;
                \draw[gray, very thick] (0,0) -- (4,0);
               % \node at (3.5, 0.5) {$\tau_3$} ;
                \draw[gray, very thick] (-3,0) -- (0,0);
                \draw[black, very thick] (-1.5,0) -- (2,0);
               % \node at (-0.9, 0.5) {$\tau_1$} ;
              %  \draw[gray, very thick] (-1,-0.5) -- (2.5,-0.5);
               % \node at (1.2, -0.8) {$\tau_2$} ;
                
                \node at (2,0) {$\bullet$}; % p
                \node at (2,0.3) {$p$};
                \node at (2,-0.4) {$\sigma$};
                \node at (-1.5,0) {$\bullet$}; %q_1
                \node at (-1.5,0.3) {$q_1$};
                \node at (-1.5,-0.4) {$\tau_1$};
                \node at (-0,0) {$\bullet$}; %q_2
                \node at (-0,0.3) {$q_2$};
                \node at (-0,-0.4) {$\tau_2$};
            \end{tikzpicture}
\end{center}

\begin{theorem}\label{thm:3neuroncode1}
The code $\C_1  = \{\mathbf{123}, 12, 13, 23, \emptyset\}$ is not box-convex. 
\end{theorem}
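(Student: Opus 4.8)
The strategy is exactly the one sketched just before the statement: verify that $\mathcal{C}_1 = \{\mathbf{123}, 12, 13, 23, \emptyset\}$ satisfies properties (i) and (ii), since together these preclude box-convexity via \Cref{cor:classification}. For (i), I would argue that $\mathcal{C}_1$ is not an interval code. Suppose intervals $I_1, I_2, I_3$ on the line realized $\mathcal{C}_1$. Since $12$, $13$, and $23$ are all codewords, every pair of the $I_j$ intersects, so by Helly's theorem on the line (pairwise-intersecting intervals have a common point) there is a point in $I_1\cap I_2\cap I_3$; but that point has intersection pattern containing $123$, and since $123$ is a codeword this is consistent — so I instead need the finer obstruction. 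The right observation: the codeword $1$ is \emph{absent}, yet $12$ and $13$ are present and $123$ is present. Take a point $p\in I_1\cap I_2$ with pattern exactly $12$ and a point $q\in I_1\cap I_3$ with pattern exactly $13$. Both lie in $I_1$, which is an interval, so the segment between them lies in $I_1$; as we move from $p$ to $q$ inside $I_1$, we must leave $I_2$ and enter $I_3$ (or vice versa), and at the transition the pattern is either $1$ or $123$. Since $1\notin\mathcal{C}_1$, every such transition point must be in $I_2\cap I_3$ as well, i.e. pattern $123$. This forces a structural rigidity that I will push to a contradiction with the existence of the pattern-$12$ point $p$: roughly, once $I_2$ and $I_3$ both cover the "middle" region of $I_1$, one checks that the pattern-$12$ atom and pattern-$13$ atom cannot both be nonempty while $1$ stays absent. (A clean way to phrase this: $\mathcal{C}_1$ is not "locally good" / fails the standard interval-code criterion, or one can cite the well-known characterization of interval codes via the consecutive-ones property; I would check which tool the paper has made available and use the most self-contained one.)

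For (ii), suppose $\mathcal{C}_1 = \mathcal{C} \intprod \mathcal{D}$ with $\mathcal{C}, \mathcal{D}$ convex codes on $\{1,2,3\}$. The maximal codeword $123$ of $\mathcal{C}_1$ must be written as $c\cap d$ with $c\in\mathcal{C}$, $d\in\mathcal{D}$, forcing $c = d = 123$, so $123\in\mathcal{C}$ and $123\in\mathcal{D}$. Next, the codeword $12\in\mathcal{C}_1$ is $c\cap d$ for some $c\in\mathcal{C}, d\in\mathcal{D}$, and since $12\subseteq c$ and $12\subseteq d$ while $123\notin$ both would force... — more carefully, I want to show one of $\mathcal{C},\mathcal{D}$ contains all of $12,13,23$. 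The key point is that $1\notin\mathcal{C}_1$: if $1\in\mathcal{C}$ (say) and $12\in\mathcal{D}$, then $1\cap 12 = 1$ would be a codeword of the product, contradiction; so the sub-maximal codewords are constrained. I would run through the small finite case analysis: list the possible pairs $(\mathcal{C},\mathcal{D})$ of convex codes containing $123$ whose intersection products could equal $\mathcal{C}_1$, and show every decomposition has one factor equal to $\mathcal{C}_1$. Because everything lives in $2^{\{1,2,3\}}$ this is genuinely finite and short; the convexity of the factors is used only to note they are downward-reasonable (contain $\emptyset$) and to invoke that $\mathcal{C}_1$ itself, being convex, is an admissible factor.

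Finally I would combine: by (i), $\mathcal{C}_1$ has no realization in $\R^1$; by \Cref{cor:classification} any box realization in $\R^d$ expresses $\mathcal{C}_1$ as an intersection product of $d$ interval codes, and since interval codes are convex, iterating (ii) forces one of those $d$ factors to equal $\mathcal{C}_1$ — but that factor is supposed to be an interval code, contradicting (i). Hence $\mathcal{C}_1$ is not box-convex.

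The main obstacle is step (i): cleanly proving $\mathcal{C}_1$ is not an interval code without hand-waving the "move along $I_1$" argument. I expect the paper either invokes a previously established interval-code criterion (consecutive-ones / the structure of 1-dimensional realizations) or gives a two-line direct contradiction; I would aim for the direct argument above, being careful that the pattern-$12$ and pattern-$13$ atoms are open-ended subsets of $I_1$ whose union with the forced pattern-$123$ region would have to exhaust a connected interval in a way that leaves no room — formalized via the left/right endpoint ordering of the three intervals. The (ii) analysis and the final assembly are routine.
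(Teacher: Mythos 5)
Your overall architecture — establish (i) ``$\C_1$ is not an interval code'' and (ii) ``any factorization $\C_1=\C\intprod\D$ into convex codes has a factor equal to $\C_1$,'' then combine via \Cref{cor:classification} — is exactly the paper's, and your treatment of (ii) (the codeword $123$ forces $123\in\C$ and $123\in\D$, hence $\C,\D\subseteq\C_1$, and the absence of singleton codewords forces all of $12,13,23$ into a single factor) matches the paper's pigeonhole argument; the final assembly is also the paper's reasoning. The genuine gap is step (i), which you flag yourself, but the sketch as written cannot be completed along the lines you indicate. Walking from a point of pattern $12$ to a point of pattern $13$ inside $I_1$ and using the absence of the codeword $1$ only produces a point of pattern $123$ in between, which is entirely consistent with $\C_1$; and your ``rough'' claim that the $12$- and $13$-atoms cannot both be nonempty while $1$ is absent is false (take $I_1=[0,10]$, $I_2=[0,6]$, $I_3=[4,10]$). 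Any correct proof must use the codeword $23$, which your argument never touches: the code $\{123,12,13,\emptyset\}$ obtained by deleting $23$ \emph{is} an interval code, so no argument ignoring $23$ can succeed. Moreover, the ``consecutive-ones / standard interval-code criterion'' you offer as a fallback is not established or cited in a usable form in the paper, so invoking it would not be self-contained.

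Two ways to close the gap. The paper's route: the atoms of the nonempty codewords have connected union since all three intervals meet $\I^{123}$; WLOG the atom adjacent to $\I^{123}$ on the right is $\I^{12}$, which forces $I_3$ to contain no points right of $\I^{123}$; hence the $13$- and $23$-atoms lie to the left, and WLOG the atom immediately left of $\I^{123}$ is $\I^{13}$, which forces $I_2$ to contain no points left of $\I^{123}$; then $I_2\cap I_3=\I^{123}$ and the codeword $23$ never occurs, a contradiction. Alternatively, your endpoint-ordering idea works if you start from a point $r\in I_2\cap I_3\setminus I_1$ (pattern $23$), say to the right of $I_1$: then $I_2\supseteq[a_2,r]$ and $I_3\supseteq[a_3,r]$ with $a_2,a_3$ at most the right end of $I_1$, so a pattern-$12$ point $p$ satisfies $a_2\le p<a_3$ and a pattern-$13$ point $q$ satisfies $a_3\le q<a_2$, giving $a_2\le p<a_3\le q<a_2$, a contradiction. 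Either completion would make your proof correct; as submitted, step (i) is not a proof.
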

\begin{proof}
        By \Cref{lem:irreducible}, it suffices to establish that $\C_1$ is not an interval code, and that $\C_1$ is irreducible.\smallskip 
        
        \noindent \textbf{$\C_1$ is not an interval code.} This can be shown by letting $\sigma = 123, \tau_1 = 12, \tau_2 = 13, \tau_3 = 23$. 
        Note that all $\tau_i \subset \sigma$ and $\tau_i \not\subset \tau_j$ for all $i \neq j$.
        By \Cref{lem:notintervalcode}, $C_1$ is not an interval code. \\

        % Suppose for contradiction that $\C_1$ has a realization $\I = \{I_1,I_2,I_3\}$ by intervals in $\mathbb{R}^1$.
        % Since $123$ is a maximal codeword, $\I^{123}$ is an interval. 
        % Since $I_1, I_2,$ and $I_3$ are connected and all meet $\I^{123}$, the atoms of the nonempty codewords in $\C_1$ must have a connected union.
        % In particular, $\I^{123}$ is adjacent to the atom of a nonempty codeword, say on its right.
        % Without loss of generality, let it be the atom $\I^{12}$.
        % We have the following situation:
        % \begin{center}
        %     \begin{tikzpicture}
        %         \draw[black, very thick] (0,0) -- (3,0);
        %         \node at (1.5, 0.5) {$\I^{123}$} ;
        %         \draw[gray, very thick] (3,0) -- (4,0);
        %         \node at (3.5, 0.5) {$\I^{12}$} ;
        %     \end{tikzpicture}
        % \end{center}
        % Now, note that $I_3$ cannot include any points to the right of $\I^{123}$, lest $I_3$ cover $\I^{12}$.
        % Thus the atoms of $13$ and $23$ must be to the left of $\I^{123}$. \\ \\
        % Without loss of generality, let the atom directly to the left of $\I^{123}$ be $\I^{13}$, as pictured below:
        % \begin{center}
        %     \begin{tikzpicture}
        %         \draw[black, very thick] (0,0) -- (3,0);
        %         \node at (1.5, 0.5) {$\I^{123}$} ;
        %         \draw[gray, very thick] (3,0) -- (4,0);
        %         \node at (3.5, 0.5) {$\I^{12}$} ;
        %         \draw[lightgray, very thick] (-1,0) -- (0,0);
        %         \node at (-0.5, 0.5) {$\I^{13}$} ;
        %     \end{tikzpicture}
        % \end{center}
        % Now, $I_2$ cannot include anything to the left of $\I^{123}$. 
        % But this implies that $I_2 \cap I_3 = \I^{123}$, so the codeword $23$ never appears, a contradiction.\smallskip

\noindent \textbf{$\C_1$ is irreducible.} 
Suppose that $\C_1 = \C\intprod \D$ where $\C\subseteq 2^{\{1,2,3\}}$ and $\D\subseteq 2^{\{1,2,3\}}$ are convex codes. 
The maximal codeword $123$ in $\C_1$ is the intersection of codewords from $\C$ and $\D$ respectively, and the only way this can happen is if $123\in \C$ and $123\in \D$. 
Since every intersection of codewords between $\C$ and $\D$ lies in $\C_1$, we conclude that $\C\subseteq \C_1$ and $\D\subseteq \C_1$. 
Moreover, each of the codewords $\{12, 13, 23\}$ must appear in $\C$ or $\D$. 
By pigeonhole principle, two of these codewords appear in one of $\C$ or $\D$, and the third must appear in the same code lest we obtain a singleton codeword in $\C\intprod \D$. 
Hence, without loss of generality we have $\C = \{\mathbf{123}, 12, 13, 23, \emptyset\}$, which shows $\C_1$ that is irreducible. 
\end{proof}
 A very similar proof shows that $\C_2$ is not box-convex. Again, \Cref{lem:notintervalcode} shows that $\C_2$ is not an interval code.
The code $\C_2$ is not quite irreducible, but one can use a pigeonhole argument as above to show that if $\C_2 = \C\intprod \D$ then one of $\C$ and $\D$ is equal to $\C_2$ or $\C_1$.
In any case, $\C_2$ cannot be decomposed as an intersection product of interval codes. 
We now turn to $\C_3$.
%One can use the same proof to argue that $\C_2$ is not an interval code, and if $\C_2 = \C\intprod \D$, a similar pigeonhole argument shows that one of $\C$ or $\D$ is equal to $\C_2$ or $\C_1$---in either case, $\C_2$ cannot be decomposed as the intersection product of interval codes, so it is not box-convex. % commented this out because it doesn't add anything new, since we have lemmas 8 and 9 now

\begin{theorem}
The code $\C_3  =\{\mathbf{12}, \mathbf{13}, \mathbf{23}, 1,2,3, \emptyset\}$ is not box-convex. 
\end{theorem}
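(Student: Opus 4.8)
The plan is to show that $\C_3 = \{\mathbf{12}, \mathbf{13}, \mathbf{23}, 1, 2, 3, \emptyset\}$ satisfies the two conditions (i) and (ii) isolated before \Cref{thm:3neuroncode1}, so that the same two-line deduction (not realizable in $\R^1$, and not decomposable as an intersection product of interval codes) applies verbatim. So there are two things to prove: that $\C_3$ is not an interval code, and that whenever $\C_3 = \C \intprod \D$ with $\C, \D$ convex, one of $\C, \D$ equals $\C_3$.

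For condition (i), I would argue directly about intervals on the line. Suppose $\I = \{I_1, I_2, I_3\}$ realizes $\C_3$. The key structural feature of $\C_3$ is that there is no codeword $123$: no point lies in all three intervals, so $I_1 \cap I_2 \cap I_3 = \emptyset$. But $\C_3$ contains all three pairwise codewords $12, 13, 23$, meaning each pair of intervals has nonempty intersection. Three intervals on the line that pairwise intersect must have a common point, by Helly's theorem in dimension one (equivalently, if $I_i \cap I_j \neq \emptyset$ for all $i,j$ then $\max_i(\text{left endpoint of } I_i) \le \min_i(\text{right endpoint of } I_i)$). This contradicts $I_1 \cap I_2 \cap I_3 = \emptyset$, so $\C_3$ has no realization in $\R^1$.

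For condition (ii), suppose $\C_3 = \C \intprod \D$ with $\C, \D \subseteq 2^{\{1,2,3\}}$ convex. Each maximal codeword $12, 13, 23$ of $\C_3$ is an intersection $c_1 \cap c_2$ of codewords $c_1 \in \C$, $c_2 \in \D$, and since $12 \subseteq c_1$ and $12 \subseteq c_2$ while every codeword of $\C \intprod \D$ is a subset of some codeword of $\C_3$ (hence has size at most $2$ and cannot be $\{1,2,3\}$), we get that $c_1, c_2 \in \{12, 123\}$, but $123 \notin \C$ and $123 \notin \D$ would be needed — actually $123$ could appear in a factor only if it appeared in $\C_3$, which it does not, so in fact $c_1 = c_2 = 12$; thus $12 \in \C$ and $12 \in \D$, and similarly $13 \in \C \cap \D$ and $23 \in \C \cap \D$. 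Hence $\{12, 13, 23\} \subseteq \C$ and $\{12,13,23\}\subseteq\D$. Now any two of these, say $12$ and $13$, lie in $\C$, and their ``intersection'' as codewords in forming $\C \intprod \D$ with $13 \in \D$ gives $12 \cap 13 = 1 \in \C_3$, which is fine; the real constraint is that $\C$ itself, being convex, contains $\{12, 13, 23\}$ as codewords — and a convex code on $\{1,2,3\}$ containing all three $2$-element sets as codewords necessarily contains all three singletons and $\emptyset$ as well (this is a small finite check, or follows since the three pairwise intersections $I_i \cap I_j$ in a realization are nonempty and their union is connected, forcing the singleton atoms to appear; alternatively one cites the classification of convex codes on three indices). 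So $\C \supseteq \{12, 13, 23, 1, 2, 3, \emptyset\} = \C_3$, and combined with $\C \subseteq \C_3$ (every codeword of a factor is contained in a codeword of the product, and one checks no codeword outside $\C_3$ can appear) we get $\C = \C_3$.

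The main obstacle is the last step of (ii): pinning down exactly which convex codes on $\{1,2,3\}$ can contain all three maximal codewords $12, 13, 23$ and showing the only one compatible with being a factor of $\C_3$ is $\C_3$ itself. Unlike the $\C_1$ case, where the single maximal codeword $123$ forced both factors to contain $123$ and a clean pigeonhole finished the job, here I must rule out the possibility that $\C$ is a proper subcode such as $\{12, 13, 23, 1, 2, \emptyset\}$ (missing $3$) — I would do this by noting that such a code is not convex (a convex realization of three pairwise-intersecting-but-not-commonly-intersecting sets must, by connectedness of each set, produce all the intermediate singleton atoms), so convexity of the factors is doing essential work and must be invoked carefully. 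Once that finite case analysis is in hand, the conclusion that $\C_3$ is not box-convex follows immediately from (i), (ii), and \Cref{cor:classification}.
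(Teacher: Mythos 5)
Your treatment of condition (i) is correct, and it is actually cleaner than the paper's: the paper orders the atoms $\I^{12},\I^{13},\I^{23}$ along the line and derives a covering contradiction, whereas you simply note that $12,13,23\in\C_3$ forces pairwise intersections while $123\notin\C_3$ forbids a common point, contradicting Helly's theorem in $\R^1$.

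Condition (ii), however, has a genuine gap. Your key assertion that ``$123$ could appear in a factor only if it appeared in $\C_3$'' is false: a codeword of a factor need not be contained in any codeword of the product. Concretely, $\C_3=\{\mathbf{123},\emptyset\}\intprod\C_3$ and $\C_3=\C_1\intprod\C_3$ with $\C_1=\{\mathbf{123},12,13,23,\emptyset\}$, so a factor may contain $123$, and (as the first example shows) one factor may contain none of $12,13,23$; your intermediate conclusion $\{12,13,23\}\subseteq\C$ and $\{12,13,23\}\subseteq\D$ is therefore unjustified, as is the later claim ``$\C\subseteq\C_3$.'' Moreover the convexity lemma you then invoke --- a convex code on $\{1,2,3\}$ containing all three $2$-element codewords must contain all three singletons --- is false as stated: $\C_1$ is convex (unique maximal codeword) and contains $12,13,23$ but no singletons. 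The hypothesis that $123$ is \emph{not} a codeword of the factor is essential, since the connectedness argument producing the singleton atoms breaks down when the three sets have a common intersection. The repair is exactly the paper's argument: since $123\notin\C\intprod\D=\C_3$, at least one factor, say $\C$, does not contain $123$; then for each of $12,13,23$ the factor codeword in $\C$ containing it must be the pair itself (its only supersets in $2^{\{1,2,3\}}$ are itself and $123$), so $12,13,23\in\C$; convexity of $\C$ together with $123\notin\C$ then forces $1,2,3\in\C$ by the segment/connectedness argument; and since $\C_3$ consists of every subset of $\{1,2,3\}$ except $123$, this already yields $\C=\C_3$ without needing any containment of $\C$ in $\C_3$. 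With that correction, your overall framework (conditions (i) and (ii) plus the decomposition into interval codes) matches the paper's and the conclusion follows.
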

\begin{proof}
\textbf{$\C_3$ is not an interval code.}
Suppose for contradiction that $\C_3$ has a realization $\I =\{I_1, I_2, I_3\}$ by intervals in $\R^1$.
The atoms of the maximal codewords $12$, $13$, and $23$ in this realization are each intervals. 
By permuting the indices, we can assume that they appear in the order $12$, $13$, $23$. 
But then $I_2$ contains points to the left and to the right of $\I^{13}$, and so $I_2$ covers $\I^{13}$.
Since the codeword $13$ does not contain 2, this is a contradiction. \smallskip

\noindent \textbf{$\C_3$ is irreducible.}
Suppose that $\C_3 = \C\intprod \D$ where $\C\subseteq 2^{\{1,2,3\}}$ and $\D\subseteq 2^{\{1,2,3\}}$ are convex codes. 
Since $123$ is not a codeword of $\C_3$, at least one of $\C$ or $\D$ does not have $123$ as a codeword.
Say without loss of generality that $\C$ does not contain $123$. 
Since $12$, $13$, and $23$ are codewords in $\C_3$, and each is the intersection of some codeword in $\C$ with a codeword in $\D$, each of $12$, $13$, and $23$ must appear as a codeword in $\C$. 
But since $\C$ is convex and does not contain $123$, this means that $\C$ must contain the singleton codeword $1$---indeed, any line segment from the atom of $12$ to the atom of $13$ in a convex realization of $\C$ must pass through the atom of $1$.
Similar reasoning shows that the singletons $2$ and $3$ are codewords of $\C$, so $\C=\C_3$. 
\end{proof}

We conclude with one last example of a convex code that is not box-convex, this time on four indices. 
Its realizations consist of a central region with three ``flaps'' attached, and it is not box-convex because one cannot add three flaps to a box to obtain a larger box.
Such ``flaps'' will also play a role in \Cref{sec:embedding-dimension-gap}. 

\begin{theorem}\label{thm:flaps}
The code $\C_4 = \{\mathbf{1234}, 12, 13, 14, \emptyset\}$ is open and closed convex in $\R^2$, but not box-convex in any dimension.
\end{theorem}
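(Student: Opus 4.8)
The plan is to handle the two halves of the statement separately. For the positive direction---that $\C_4 = \{\mathbf{1234}, 12, 13, 14, \emptyset\}$ is open and closed convex in $\R^2$---I would exhibit an explicit realization. Take a central convex region (say, a small disk or triangle) which will be the atom $\U^{1234}$, covered by all four sets. Then attach three thin ``flaps'' emanating from this central region in three different directions; along the $j$-th flap (for $j = 2, 3, 4$) only sets $U_1$ and $U_j$ reach out, producing the atom $\U^{1j}$. Concretely, let $U_1$ be a convex set containing the central region together with three protrusions, and let $U_j$ (for $j \ge 2$) be a convex set equal to the central region plus the single protrusion in direction $j$. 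One checks directly that the only nonempty atoms are $\U^{1234}$, $\U^{12}$, $\U^{13}$, $\U^{14}$, and the unbounded complement $\U^{\emptyset}$, so $\code(\U) = \C_4$. The same picture works with open sets (thicken slightly and take interiors) and with closed sets, giving $\odim(\C_4) = \cdim(\C_4) = 2$; I would include a figure to make the construction transparent.

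For the negative direction, the natural route is again via \Cref{cor:classification}: I would show $\C_4$ is not an interval code and that it cannot be written as a nontrivial intersection product, so that it satisfies analogues of conditions (i) and (ii) from \Cref{sec:small-codes}. First, $\C_4$ is not an interval code: in any interval realization the atom $\I^{1234}$ is an interval, and each of $I_1, I_2, I_3, I_4$ is a connected interval meeting it. The atoms $\I^{12}, \I^{13}, \I^{14}$ must lie on the left or right of $\I^{1234}$, so by pigeonhole two of them, say $\I^{1j}$ and $\I^{1k}$, lie on the same side of $\I^{1234}$. Whichever is farther from $\I^{1234}$, say $\I^{1k}$, forces the set $I_j$ to be ``jumped over'': $I_j$ contains points of $\I^{1234}$ and (if $\I^{1j}$ is farther) points beyond $\I^{1k}$, or else $I_1$ together with the geometry forces a codeword containing $j$ on $\I^{1k}$---in any case we get a contradiction because $1k$ is a codeword not containing $j$ yet the interval $I_j$ must cover $\I^{1k}$. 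I would write this out carefully, mirroring the argument for $\C_3$.

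Second, I would verify that $\C_4 = \C \intprod \D$ with $\C, \D$ convex forces $\C = \C_4$ or $\D = \C_4$. Since $1234$ is the unique maximal codeword and must be an intersection of a codeword of $\C$ with one of $\D$, we get $1234 \in \C$ and $1234 \in \D$, hence $\C \subseteq \C_4$ and $\D \subseteq \C_4$. Each of $12, 13, 14$ must arise as $c_1 \cap c_2$; the only codewords of $\C_4$ whose intersection can equal $1j$ are $1j$ itself and $1234$, so for each $j \in \{2,3,4\}$ at least one of $\C, \D$ contains $1j$. By pigeonhole two of these three codewords, say $1j$ and $1k$, lie in the same factor, say $\C$; but then if the third codeword $1\ell$ lies only in $\D$, the intersection of $1j \in \C$ with $1\ell \in \D$ is the singleton $\{1\}$, which is not in $\C_4$---a contradiction. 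Hence $\C$ contains all of $12, 13, 14$, and since $\C$ is convex and $1234 \in \C$, no spurious codewords are forced, giving $\C = \C_4$. Thus $\C_4$ cannot be an intersection product of interval codes, so it is not box-convex. The main obstacle is the interval-non-realizability argument: unlike the three-index cases, there are more flaps and more configurations to rule out, so the case analysis on which atoms sit on which side of $\I^{1234}$ needs to be organized cleanly---but the pigeonhole ``two flaps on one side, and the outer one gets covered'' mechanism should carry it through.
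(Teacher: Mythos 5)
The genuine gap is in the convexity half. Your realization takes $U_j$ ($j=2,3,4$) to be the central region together with the $j$-th thin flap, and $U_1$ to be a convex set \emph{containing} the central region and all three flaps. As you describe them (thin flaps emanating in three different directions), the union $U_2\cup U_3\cup U_4$ is not convex, so any convex $U_1$ containing it must contain points lying in none of $U_2,U_3,U_4$. Those points form a nonempty atom for the singleton $1$, so your collection realizes $\{\mathbf{1234},12,13,14,1,\emptyset\}$ rather than $\C_4$; the ``direct check'' you invoke fails exactly here. A correct construction must have $U_1=U_2\cup U_3\cup U_4$ with this union convex: for instance, let $U_1$ be a closed triangle, let the central region be its medial triangle, and let $U_j$ be the convex hull of the medial triangle with the $j$-th vertex of $U_1$; the three corner pieces are then the atoms of $12,13,14$, and one checks that the interiors of these sets realize the same code. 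Alternatively, do what the paper does and simply cite \cite[Lemma 2.5]{local15}: any code with a unique maximal codeword is open and closed convex in $\R^2$.

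The non-box-convexity half of your proposal is essentially the paper's argument: you verify that $\C_4$ is not an interval code and that any factorization $\C_4=\C\intprod\D$ into convex codes forces one factor to equal $\C_4$, then conclude via \Cref{cor:classification}; your condition (ii) argument coincides with the paper's. For condition (i), your pigeonhole idea (two of the atoms $\I^{12},\I^{13},\I^{14}$ have points on the same side of $\I^{1234}$) works and in fact avoids the paper's adjacency-based case analysis, but the covering step should be stated the right way around: if $p\in\I^{1j}$ and $q\in\I^{1k}$ lie on the same side of $\I^{1234}$ with $q$ farther out, then $I_k$ contains a point of $\I^{1234}$ and contains $q$, hence contains $p$, contradicting $k\notin\patt_\I(p)=\{1,j\}$. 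That is, the \emph{inner} point is covered by the outer index's interval; it is not the case that ``$I_j$ must cover $\I^{1k}$.'' With that correction, and with the factorization argument iterated over a $d$-fold product of interval codes as in \Cref{sec:small-codes}, your negative direction is sound.
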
 

\begin{proof}
The code $\C_4$ is open and closed convex in $\R^2$ since it has a unique maximal codeword (see \cite[Lemma 2.5]{local15}).
To see that $\C_4$ is not box-convex, we will use \Cref{lem:irreducible}.\smallskip

\noindent \textbf{$\C_4$ is not an interval code.} 
Let $\sigma = 1234, \tau_1 = 12, \tau_2 = 13, \tau_3 = 14$. Note that $\tau_i \subset \sigma$ and $\tau_i \not\subset \tau_j$ for all $i\neq j$.
By \Cref{lem:notintervalcode}, $C_1$ is not an interval code. \smallskip

\noindent \textbf{$\C_4$ is irreducible.} 
Suppose that $\C_4 = \C\intprod \D$ where $\C\subseteq 2^{[4]}$ and $\D\subseteq 2^{[4]}$ are convex codes. 
Since $1234$ is the intersection of codewords from $\C$ and $\D$, we must have $1234 \in \C$ and $1234 \in \D$.
This implies that $\C \subseteq \C_4$ and $\D \subseteq \C_4$.  \\
Since every pairwise intersection of elements in $\{12, 13, 14\}$ would produce the singleton codeword $1$, the only possibility is (up to swapping) $\C = \{ \mathbf{1234}, 12, 13, 14, \emptyset \}$ and $\D = \{\mathbf{1234}, \emptyset \}$. 
Thus $\C_4$ is irreducible. 
\end{proof}

\subsection{Non-monotonicity of box-convexity}\label{sec:non-monotone}
Cruz, Giusti, Itskov and Kronholm~\cite{CGIK} showed that open convexity is a monotone property of codes, in the sense that it is preserved by adding new non-maximal codewords.
Below, we use the notation $\Delta(\C) \eqdef \{\sigma\subseteq [n]\mid \sigma\subseteq c\text{ for some $c\in \C$}\}$ to denote the abstract simplicial complex generated by $\C$. 
\begin{theorem}[Monotonicity of open convexity, \cite{CGIK}] \label{thm:monotone}
Let $\C\subseteq 2^{[n]}$ be an open convex code, and let $\D\subseteq 2^{[n]}$ be a code with $\C\subseteq \D\subseteq \Delta(\C)$. 
Then $\D$ is open convex and $\odim(\D)\le \odim(\C)+1$.
\end{theorem}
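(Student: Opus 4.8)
The plan is to start from an open convex realization $\U = \{U_1,\dots,U_n\}$ of $\C$ in $\R^d$ with $d = \odim(\C)$, and to build from it an open convex realization of $\D$ in $\R^{d+1}$. The useful structural observation is that since $\C\subseteq\D\subseteq\Delta(\C)$, the codes $\C$ and $\D$ have \emph{exactly the same maximal codewords}, and every $\tau\in\D\setminus\C$ satisfies $\tau\subseteq\sigma$ for some maximal codeword $\sigma$ of $\C$. So the task reduces to the following: starting from the realization of $\C$, retain all of its atoms and, for each $\tau\in\D\setminus\C$, additionally carve out one new atom whose intersection pattern is exactly $\tau$ -- using the one extra coordinate direction to perform all of this carving at once.

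Concretely, I would first lift $\U$ into $\R^{d+1}$, thickening each $U_i$ so that its height-$0$ slice is still $U_i$; this slice then realizes $\C$. Next, for each $\tau\in\D\setminus\C$ I would fix a maximal codeword $\sigma\in\C$ with $\tau\subseteq\sigma$ and a point $p$, together with a small ball around it, lying in every $U_i$ with $i\in\sigma$ and meeting no $U_j$ with $j\notin\sigma$ (such a point exists because $\U^\sigma=\bigcap_{i\in\sigma}U_i$ is open and cannot be swallowed by the closures of the other sets). Near $p$ I would push a thin convex ``flap'' $P_\tau$ out of the lifted realization in the $(d{+}1)$-st direction and declare it to belong to precisely the sets indexed by $\tau$. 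Two features make this work. Deep inside $P_\tau$ the only sets present are those indexed by $\tau$, so $\tau$ becomes a codeword; and because the flap is anchored where all the sets of $\sigma$ locally coincide, every pattern met while passing from the bulk into $P_\tau$ is a subset of $\sigma$, and the flap can be shaped so that the pattern jumps directly from $\sigma$ to $\tau$, introducing no codeword outside $\D$. Placing the flaps for distinct new codewords in pairwise disjoint regions of $\R^{d+1}$ (say, at distinct heights along the new axis) keeps them from interacting, which is precisely why a single extra dimension suffices no matter how many codewords are added. A final bookkeeping check -- the collection is open and convex, the height-$0$ slice realizes $\C$, each flap contributes exactly its $\tau$, nothing else appears -- then gives $\code = \D$ and $\odim(\D)\le d+1$.

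The main obstacle is executing the flap construction while simultaneously (a) keeping each $V_i$ convex and (b) avoiding spurious codewords. A convex set admits no ``dents,'' so $\tau$ must be exposed by \emph{adding} a protrusion shared by the sets of $\tau$, not by locally deleting the sets of $\sigma\setminus\tau$; and as soon as one enlarges a convex set by a protrusion, convexity forces in the whole convex hull of the set with that protrusion, which threatens both to collide with other sets and to create transitional patterns not in $\D$. The construction must therefore be engineered so that these forced hulls stay inside a controlled region -- anchoring each flap where the sets $U_j$ with $j\in\sigma$ agree, making it extremely thin, adapting its apex and direction to the local shape of the $U_i$, and, if needed, pre-tapering the lifted sets so the flap attaches cleanly. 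Verifying that such choices can always be made, and that the forced convex hulls never produce a codeword outside $\D$, is the technical heart of the proof. I would also note that there is no shortcut through \Cref{thm:product}: one cannot in general write $\D = \C\intprod\E$ for any code $\E$ -- already adjoining the codeword $12$ to $\{\mathbf{123},13,\emptyset\}$ forces the singleton $1$ into every such intersection product -- so a genuine geometric construction is unavoidable.
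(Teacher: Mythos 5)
You should note first that the paper does not actually prove \Cref{thm:monotone}; it is quoted from Cruz, Giusti, Itskov, and Kronholm, and the paper only recalls that their proof works by cutting cylinders over a realization of $\C$ with a large open ball, exploiting the ball's curvature. Your plan is in the same general spirit---pass to $\R^{d+1}$, keep a slice realizing $\C$, and carve one new atom for each $\tau\in\D\setminus\C$ above an anchor point in the atom of a maximal codeword $\sigma\supseteq\tau$---and your side observations are correct ($\C$ and $\D$ share maximal codewords, and your example showing that $\D$ need not factor as $\C\intprod\mathcal{E}$ is right, so \Cref{thm:product} cannot shortcut this). But the write-up stops exactly at the step that constitutes the theorem: the assertion that ``the flap can be shaped so that the pattern jumps directly from $\sigma$ to $\tau$, introducing no codeword outside $\D$'' is never justified, and you yourself label it the technical heart. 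That is a genuine gap, not a deferred routine check.

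Moreover, as literally described the construction fails. If $V_i=\conv\bigl((U_i\times(0,1))\cup P_\tau\bigr)$ with $P_\tau$ a thin flap above a small ball around the anchor $p\in\U^\sigma$, then the slice of $V_i$ at height $1+\delta$ is not a thin neighborhood of the flap: segments from points $(x,t)$ with $t$ near $1$ and $x$ anywhere in $U_i$ up to the flap show that this slice contains a homothetic copy of $U_i$ shrunk toward $p$ by a factor tending to $1$ as $\delta\to 0$. Hence just above the slab one sees patterns of the form $c\cap\tau$ for arbitrary $c\in\C$ (and, with several flaps, $c\cap(\tau_1\cup\tau_2)$, and so on). These lie in $\Delta(\C)$ but need not lie in $\D$, because the theorem assumes no downward-closedness of $\D$---that extra hypothesis is precisely what \Cref{thm:weak-monotonicity} adds to make a soft argument possible. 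Concretely, take $\C=\{\mathbf{123},23,\emptyset\}$ (realized by a small ball $U_1$ nested inside $U_2=U_3$) and $\D=\C\cup\{12\}$: the flap for $\tau=12$, added to $V_1$ and $V_2$, forces points just above the slab lying in the shrunk copy of $U_2$ but outside $U_1$, creating the spurious codeword $2=23\cap12\notin\D$. Repairing this is exactly where the cited proof's mechanism enters: taking $V_i$ to be the cylinder $U_i\times\R$ intersected with a suitable open ball makes convexity automatic and replaces ad hoc flaps by concave ``dome'' height profiles whose relative heights over each anchor can be tuned, so that every transition passes only through patterns of $\D$. Your sketch (including the passing remark about ``pre-tapering the lifted sets'') gestures at this but supplies no substitute for it, so the proposal does not yet constitute a proof.
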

This result fails for closed convex codes, as shown by Gambacini, Jeffs, Macdonald, and Shiu \cite{GJMS}. 
It is thus natural to ask whether an analogous result holds for box-convex codes.
It turns out that it does not. 
The method of proof used by Cruz, Giusti, Itskov, and Kronholm relies heavily upon the curvature of an open ball cutting through cylinders over a realization of $\C$.
Such a technique cannot be applied for boxes since they exhibit no curvature on the boundary.

The codes we constructed earlier in this section provide an explicit counterexample showing that box-convexity is not monotone. 
Consider the code $\C = \{\mathbf{1234}, 12, 13, \emptyset\}$.
This is certainly box-convex, and in fact this code can be realized by intervals in $\R^1$.
However, the code $\C_4$ of \Cref{thm:flaps} has the property that $\C\subseteq \C_4\subseteq \Delta(\C)$, but $\C_4$ is not box-convex.

%%%%%%%%%%%%%%%%%%%%%%%%%%%%%%%%%%%%%%%%%%%%%%%%%%%%%%%%%%%%%%%%%%%%%%%%%%%%%%%%
%%%%%%%%%%%%%%%%%%%%%%%%%%%%%%%%%%%%%%%%%%%%%%%%%%%%%%%%%%%%%%%%%%%%%%%%%%%%%%%%
%%%%%%%%%%%%%%%%%%%%%%%%%%%%%%%%%%%%%%%%%%%%%%%%%%%%%%%%%%%%%%%%%%%%%%%%%%%%%%%%
\section{Open embedding dimension versus box embedding dimension}\label{sec:embedding-dimension-gap}

In this section, we explore differences in embedding dimensions between ordinary convex codes and box-convex codes.
The open embedding dimension and box embedding dimension of a code may be vastly different.
Identifying specific characteristics of these differences allows us to better understand restrictions on box-convexity.

Our examples in this section are based on codes whose realizations are sunflowers.
We say that $\U = \{U_1,\ldots, U_n\}$ is a \emph{sunflower} if $\code(\U) = \{ [n], 1, 2, \ldots, n, \emptyset \}$.
The sets $U_i$ are called \emph{petals} and $\bigcap_{i\in[n]} U_i$ is called the \emph{center} of the sunflower.
Sunflowers have proved to be highly useful in the study of open embedding dimensions of convex codes \cite{sunflowers, embeddingphenomena}, and they will additionally help us prove that box embedding dimension can be arbitrarily larger than the closed or open embedding dimensions of a code.
Specifically, we will prove the following theorem. 

\begin{theorem} For even $n\ge 4$, the code $\mathcal F_n \eqdef \{ [n], 1, 2, \ldots, n, \emptyset \}$ has $\bdim(\mathcal F_n) = n/2$, while on the other hand $\odim(\mathcal F_n) = \cdim(\mathcal F_n) =2$.
\end{theorem}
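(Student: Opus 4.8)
The plan is to treat the two assertions separately. The equality $\odim(\mathcal F_n)=\cdim(\mathcal F_n)=2$ is the easy part: for the upper bound I would realize $\mathcal F_n$ as a sunflower in $\mathbb R^2$ --- a common center with $n$ petals pointing in directions $2\pi i/n$ that are pairwise disjoint away from the center. For the closed case take the center to be a single point and each $U_i$ a segment from it; for the open case take the center to be a small open disk $D$ and $U_i=\conv(D\cup\{p_i\})$ with the apexes $p_i$ far enough out that these ``ice-cream cones'' meet only inside $D$. In both cases $\code(\mathbf U)=\mathcal F_n$. For the lower bound it suffices to show that $\mathcal F_n$ (with $n\ge 3$) is not realizable by intervals in $\mathbb R^1$, which is exactly the $d=1$ instance of the box lower bound proved below, so I would prove that statement once and for all.

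For $\bdim(\mathcal F_n)\le n/2$ I would exhibit the realization coordinate by coordinate. Pair the indices as $\{1,2\},\{3,4\},\dots,\{n-1,n\}$, and for $k=1,\dots,n/2$ let $\mathcal I_k=\{[n],\{2k-1\},\{2k\},\emptyset\}$. Each $\mathcal I_k$ is an interval code: realize it with $n-2$ copies of $[0,1]$, the interval $[-1,1]$ for index $2k-1$, and $[0,2]$ for index $2k$. Checking all ways of selecting one codeword from each factor shows $\mathcal I_1\intprod\cdots\intprod\mathcal I_{n/2}=\mathcal F_n$: taking every factor's $[n]$ gives $[n]$; taking a single singleton and the rest $[n]$ gives that singleton; and taking two singletons from different pairs, or any $\emptyset$, gives $\emptyset$. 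By \Cref{cor:classification} (equivalently \Cref{cor:dimensions}), $\mathcal F_n$ is box-convex in $\mathbb R^{n/2}$.

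For $\bdim(\mathcal F_n)\ge n/2$, suppose $\mathbf W=\{W_1,\dots,W_n\}$ realizes $\mathcal F_n$ by boxes in $\mathbb R^d$ and write $W_j=\prod_{k=1}^d I_j^{(k)}$ with $I_j^{(k)}=[a_j^{(k)},b_j^{(k)}]$. Since $[n]$ is a codeword, each center interval $C^{(k)}:=\bigcap_j I_j^{(k)}=[a^{(k)},b^{(k)}]$ is nonempty, and $C^{(k)}\subseteq I_j^{(k)}$ gives $a_j^{(k)}\le a^{(k)}$ and $b_j^{(k)}\ge b^{(k)}$ for every $j$. Put $L_k=\{j:a_j^{(k)}<a^{(k)}\}$ and $R_k=\{j:b_j^{(k)}>b^{(k)}\}$. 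The two facts I would establish are: (i) $|L_k|\le1$ and $|R_k|\le1$ for each $k$, because the point $q$ with $q^{(k)}=a^{(k)}-\veps$ (small $\veps>0$) and $q^{(k')}=a^{(k')}$ for $k'\ne k$ has $\patt_{\mathbf W}(q)=L_k$, so $L_k$ is a codeword of $\mathcal F_n$, and $L_k\ne[n]$ since the index attaining $a_j^{(k)}=\max_j a_j^{(k)}=a^{(k)}$ is excluded from $L_k$ --- hence $L_k$ is a singleton or empty, and symmetrically for $R_k$; and (ii) $\bigcup_{k=1}^d(L_k\cup R_k)=[n]$, because if $j$ avoided all these sets then $I_j^{(k)}=C^{(k)}$ for every $k$, forcing $W_j=\bigcap_{j'}W_{j'}$ and hence every point of $W_j$ to have pattern $[n]$, contradicting that the atom of the codeword $\{j\}$ is nonempty. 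Together (i) and (ii) give $n\le\sum_{k=1}^d(|L_k|+|R_k|)\le2d$, so $d\ge n/2$; combined with the previous paragraph, $\bdim(\mathcal F_n)=n/2$.

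I expect the main obstacle to be verifying claim (i): checking that the test point $q$ has intersection pattern exactly $L_k$ (and the symmetric statement for $R_k$). This needs $\veps$ chosen smaller than $a^{(k)}-a_j^{(k)}$ for every $j\in L_k$, together with the observations that in the other coordinates $q^{(k')}$ sits in the common interval $C^{(k')}\subseteq I_j^{(k')}$ for all $j$, and that the right endpoints $b_j^{(k)}\ge b^{(k)}\ge a^{(k)}$ never cause trouble. Once this local bookkeeping is done, both the counting argument and the sunflower constructions are routine.
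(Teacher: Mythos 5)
Your upper bound is the paper's: the same pairing into factor codes $\{[n],\{2k-1\},\{2k\},\emptyset\}$, each realized by intervals, with the intersection product checked to be $\mathcal F_n$. Your lower bound, however, takes a genuinely different route. The paper argues combinatorially through \Cref{cor:classification}: any box realization in $\R^d$ gives $\mathcal F_n$ as an intersection product of $d$ interval codes, each factor must contain $[n]$ and hence sit inside $\mathcal F_n$, and the one-dimensional fact that an interval code consisting of $[n]$, singletons, and $\emptyset$ has at most two singletons (stated with a brief justification, in the spirit of \Cref{thm:bdim1}) yields $n\le 2d$ by pigeonhole. You instead work directly with the endpoints of the coordinate projections of the boxes: your sets $L_k,R_k$ of indices whose $k$-th interval strictly overhangs the common center on the left or right satisfy $|L_k|,|R_k|\le 1$ (your test-point computation of the intersection pattern is correct, and $L_k$ omits the index attaining the maximal left endpoint, so it cannot be $[n]$), while every index must overhang in some coordinate, since otherwise its box equals the center and the atom of that singleton is empty. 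This is the same count $n\le 2d$, carried out geometrically rather than via \Cref{thm:product}; it is self-contained and makes the ``two per coordinate'' phenomenon explicit, at the cost of not exhibiting the product-theorem machinery that is the paper's point.

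Two details in the easy part of your write-up need repair. First, the open planar realization is stated backwards: if the apexes $p_i$ are far from the small disk $D$, the tangency arcs of the cones $\conv(D\cup\{p_i\})$ approach half-circles and overlap, so two cones meet just outside $D$ and the forbidden codeword $\{i,j\}$ appears. You need the apexes close to $D$, so that the tangency arcs are pairwise disjoint (and you should take interiors, since the hull contains its apex on the boundary); alternatively, simply invoke the unique-maximal-codeword lemma as the paper does. Second, your reduction of $\odim(\mathcal F_n)\ge 2$ to the $d=1$ case of your box bound literally covers closed intervals only; open convex sets in $\R^1$ are open intervals, so either note that the identical endpoint argument applies verbatim to open intervals, or convert an open-interval realization to a closed one via \Cref{thm:integercoordinates}. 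Both are one-line fixes; the main statement $\bdim(\mathcal F_n)=n/2$ is proved correctly.
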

\begin{proof}
Note that $\odim(\mathcal F_n)$ and $\cdim(\mathcal F_n)$ are both at most two, since $\mathcal F_n$ has a unique maximal codeword (see \cite[Lemma 2.5]{local15}). 
To prove that $\bdim(\mathcal F_n) = n/2$, we work by induction, starting from the slightly smaller base case $n=2$. 

Let $n = 2k$, where $k\ge 1$.
We will prove that $\bdim(\mathcal F_n) = n/2$ by induction on $k$.  
In the base case $k=1$, observe that $\mathcal F_2 = \{\mathbf{12}, 1, 2, \emptyset\}$, which can be realized by intervals in $\R^1$. 
For the inductive step, it suffices to prove that $\mathcal F_n$ is the intersection product of $k$-many interval codes, and no fewer.
We have $\mathcal F_n = \C_1\intprod\cdots \intprod \C_k$, where $\C_i = \{[n], 2i-1, 2i, \emptyset\}$, which establishes that $\bdim(\mathcal F_n) \le k$.

To establish the matching lower bound, assume for contradiction that  $\mathcal F_n = \C_1 \intprod \cdots \intprod \C_{\ell}$ where $\ell < k$ and each $\C_i$ is an interval code.
Since $[n]$ is a codeword of $\mathcal F_n$, it must be a codeword of each $\mathcal C_i$.
Consequently, we have $\mathcal C_i\subseteq \mathcal F_n$ for each $1\le i \le \ell$, and moreover each singleton codeword must appear in some $\C_i$.
However, each $\C_i$ can contain at most two singleton codewords, since it is an interval code consisting of the codeword $[n]$ together with singletons and the empty set. 
There are $2k > 2\ell$ total singletons that must appear among the $\C_i$, a contradiction by pigeonhole principle. 
We conclude that $\mathcal F_n$ is the intersection product of $k$-many interval codes and no fewer, proving the result. 
\end{proof}

\section{Integer coordinates and open boxes}\label{sec:integer-coords}
%%%%%%%%%%%%%%%%%%%%%%%%%%%%%%%%%%%%%%%%%%%%%%%%%%%%%%%%%%%%%%%%%%%%%%%%%%%%%%%%
%%%%%%%%%%%%%%%%%%%%%%%%%%%%%%%%%%%%%%%%%%%%%%%%%%%%%%%%%%%%%%%%%%%%%%%%%%%%%%%%
%%%%%%%%%%%%%%%%%%%%%%%%%%%%%%%%%%%%%%%%%%%%%%%%%%%%%%%%%%%%%%%%%%%%%%%%%%%%%%%%

In this section we discuss a strong application of the product theorem to box codes.
As mentioned in \Cref{sec:introduction}, the product theorem applies naturally to boxes since each box in $\R^d$ is the Cartesian product of $d$-many intervals. 
In particular, \Cref{thm:product} allows us to start with a collection of $n$ boxes in $\R^d$, decompose it into $d$-many collections of $n$ intervals in $\R^1$, perform some transformations on each collection of intervals in $\R^1$ that do not change their codes, and then return to a collection of boxes in $\R^d$ by taking products of our intervals.
In this way, we can obtain nicer realizations in $\R^d$ by performing simple transformations in $\R^1$. 

Our main result in this section is \Cref{thm:integercoordinates}, which guarantees realizations that are ``nicer" in the sense that all the boxes have corners with bounded integer coordinates, and interiors that realize the same code. 
Moreover, this theorem guarantees that the family of  codes realizable by open boxes (which are products of nonempty open intervals) is identical to the family of box-convex codes, i.e. there is no difference between codes realizable with open boxes and those realizable with closed boxes. 
As mentioned in \Cref{sec:introduction}, this contrasts the situation for convex codes, where open convex codes differ greatly from closed convex codes. 

\begin{theorem}\label{thm:integercoordinates}
Given a realization $\U = \{U_1, \ldots, U_n\}$ of a code $\C\subseteq 2^{[n]}$ by boxes in $\R^d$, there exists a realization $\V = \{V_1,\ldots, V_n\}$ of $\C$ by boxes, with the following properties:
\begin{itemize}
    \item[(i)]each $V_i$ has corners with integer coordinates between 1 and $2n$; that is, each $V_i$ is the Cartesian product of closed intervals with integer endpoints between $1$ and $2n$, and
    \item[(ii)] the interiors of the $V_i$ also comprise a realization of $\C$. 
\end{itemize}
Moreover, if $\C$ has a realization by interiors of boxes in $\R^d$, then it has a realization by boxes in $\R^d$. 
\end{theorem}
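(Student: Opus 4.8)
The plan is to use the product theorem (\Cref{thm:product}) to reduce everything to a statement about interval codes in $\R^1$, prove the analog there by an explicit normalization of interval endpoints, and then reassemble. Concretely, suppose $\U = \{U_1,\ldots,U_n\}$ realizes $\C$ by boxes in $\R^d$. For each coordinate direction $j \in [d]$, projecting the boxes onto the $j$-th axis yields a collection $\I_j = \{I_{1,j},\ldots,I_{n,j}\}$ of closed intervals in $\R^1$ (here $I_{i,j}$ is empty exactly when $U_i$ is empty). Since $U_i = \prod_{j=1}^d I_{i,j}$, \Cref{thm:product} (applied $d-1$ times) gives $\C = \code(\U) = \code(\I_1) \intprod \cdots \intprod \code(\I_d)$. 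The key reduction is thus: it suffices to show that every interval code on $[n]$ has a realization by closed intervals with integer endpoints in $\{1,\ldots,2n\}$ whose interiors realize the same code, since taking products of these normalized one-dimensional realizations recovers a realization of $\C$ with all the desired properties (a product of intervals with integer endpoints in $[1,2n]$ is a box with integer corner coordinates in $[1,2n]$, and a product of open intervals is the interior of the corresponding product of closed intervals, so condition (ii) follows from the one-dimensional case via \Cref{thm:product} applied to the open intervals).

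The one-dimensional normalization is the technical heart, and I expect it to be the main (though quite manageable) obstacle. Start with an arbitrary realization $\{I_1,\ldots,I_n\}$ of an interval code by closed intervals in $\R^1$; let $I_i = [a_i, b_i]$ for nonempty $I_i$. The code is determined by the combinatorial data of which point lies in which interval, which in turn is determined only by the relative order (with ties) of the $2n$ endpoints $a_1,b_1,\ldots,a_n,b_n$. The plan is first to \emph{break ties} by perturbation: if $a_i = b_i$ (a point interval) or two endpoints coincide, one can nudge endpoints by a small $\veps$ without changing the code, arranging that all $2n$ endpoints are distinct and every interval has nonempty interior. One must check the perturbation preserves the code: shrinking or expanding an interval slightly only affects points near the boundary, and by choosing $\veps$ smaller than the minimum gap between distinct endpoints, no atom is destroyed and no new intersection pattern is created. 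Once the $2n$ endpoints are distinct, list them in increasing order and replace the $k$-th smallest by the integer $k$; this order-preserving bijection $\R \to \{1,\ldots,2n\}$ carries each $I_i = [a_i,b_i]$ to an integer interval $[a_i', b_i']$ with $a_i' < b_i'$, and preserves the code because membership $p \in I_i$ is an order relation among $p$ and the endpoints, and we can carry test points along by the same relabeling (a test point in the $k$-th gap maps to $k + \tfrac12$, say). Finally, for condition (ii), observe that after the tie-breaking step every interval has $a_i' < b_i'$, so its interior $(a_i', b_i')$ is a nonempty open interval; one checks that $\{(a_1',b_1'),\ldots\}$ realizes the same code, e.g. because each closed atom point can be replaced by an interior point of the same atom (the atoms are themselves intervals, now with nonempty interior wherever the boundary behavior mattered — more carefully, one argues that opening all intervals simultaneously does not change the code, again using that endpoints are distinct).

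For the ``moreover'' clause, suppose $\C$ has a realization by interiors of boxes in $\R^d$, i.e.\ by products of nonempty open intervals $\{U_i^\circ\}$. Decompose each open box as a product of open intervals, take closures coordinatewise to get closed intervals, and apply \Cref{thm:product}: one needs that closing the open intervals in each coordinate does not change the interval code in that coordinate, which follows from the one-dimensional tie-breaking/normalization argument above (or directly: an open-interval realization can be normalized to integer endpoints, and then the closed intervals with those same endpoints realize the same code). Then the product of these closed intervals is a box realization of $\C$ in $\R^d$. I would phrase the proof so that the one-dimensional lemma — ``every interval code has a realization by closed integer-endpoint intervals in $[1,2n]$ with nonempty interiors, and the open intervals realize the same code'' — is isolated and proved once, after which both the main statement and the ``moreover'' clause drop out by invoking \Cref{thm:product}. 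The only place requiring genuine care is verifying that the perturbation and the closing/opening operations preserve the code; everything else is bookkeeping with order relations.
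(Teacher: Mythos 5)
Your overall architecture is the same as the paper's: reduce to dimension one via \Cref{thm:product} by projecting onto coordinates, normalize an interval realization so that endpoints become integers given by their rank among the $2n$ endpoints, argue the interiors realize the same code, and reassemble by taking products. However, the step you lean on for condition (ii) --- ``break ties by perturbation \ldots arranging that all $2n$ endpoints are distinct'' without changing the code --- is not just delicate, it is false in general, and this is a genuine gap. Consider the interval code $\C = \{123, 2, 3, \emptyset\}$, realized by $I_1 = \{0\}$, $I_2 = [-1,0]$, $I_3 = [0,1]$. Because the codewords $1, 12, 13$ are absent we must have $I_1 \subseteq I_2 \cap I_3$, and because $23$ is absent we must have $I_2 \cap I_3 \subseteq I_1$; hence in \emph{every} interval realization $I_1 = I_2\cap I_3$, so the endpoints of $I_1$ coincide with endpoints of $I_2$ or $I_3$. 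No perturbation, however small or cleverly directed, can make all six endpoints distinct while preserving the code, so your subsequent argument for (ii) (``opening all intervals simultaneously does not change the code, using that endpoints are distinct'') has no valid starting point. The same issue infects your ``directly'' remark in the moreover clause: for open intervals $(0,1)$ and $(1,2)$, the closed intervals with the same endpoints realize a strictly larger code, so closing ``with those same endpoints'' does not preserve the code in the presence of left--right endpoint coincidences.

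The paper's proof avoids this by demanding much less than distinctness: it first applies the rank map (which, like your relabeling, sends equal endpoints to equal integers, so ties are simply tolerated in step (i)), and then extends every closed interval by $1/4$ on both sides. This symmetric extension separates every coincident left-endpoint/right-endpoint pair in the only direction compatible with the code (it widens the atom that lived at the coincidence point), while left--left and right--right coincidences are allowed to persist --- and the interiors argument only needs that no point is simultaneously a left and a right endpoint, since then every intersection pattern achieved at an endpoint is also achieved slightly to one side. Your proof becomes correct if you replace ``make all $2n$ endpoints distinct'' by this weaker separation property (and, in the moreover clause, shrink the open intervals by $1/4$ before taking closures); as written, the perturbation step you describe cannot be carried out for codes such as $\{123,2,3,\emptyset\}$.
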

\begin{proof} 
First, we work with just one coordinate.
We start with a collection $\I = \{I_1, \ldots, I_n\}$ of closed intervals in $\R^1$. 
We may express each interval according to its endpoints, as $I_1 = [a_1, a_2]$, $I_2 = [a_3, a_4]$, and so on.
Observe that the relative order of these endpoints completely determines $\code(\I)$.
Specifically, for every $p\in \R$ we have $i\notin \patt_\I(p)$ if and only if $a_{2i} < p$ or $p < a_{2i-1}$.
The relative order of the endpoints also determines the code of the interiors of the intervals in $\I$, according to the fact that a point $p$ does not lie in the interior of $I_i$ if and only if $a_{2i} \le p$ or $p \le a_{2i-1}$.

To prove item (i) in the theorem, consider the function $f : \mathbb{R} \to \mathbb{N}$ which sends $a_i$ to $k$, where $a_i$ is the $k$-th smallest number out of the set of endpoints $\{a_1, a_2, \ldots, a_{2n}\}$.
Note that if two endpoints $a_i$ and $a_j$ are equal, then $f(a_i) = f(a_j)$.
So, $f$ sorts the endpoints of the intervals from least to greatest, and equal endpoints are sent to the same number.
Now, consider the collection of intervals $\J \eqdef \{J_1, \ldots, J_n\}$ where
\[
    J_i \eqdef [f(a_{2i-1}), f(a_{2i})] \text{\,   for $i\in [n]$}. 
\]
The collection $\J$ realizes the same code as $\I$ since the function $f$ preserves the relative order of the endpoints.
Moreover, $\J$ consists of intervals with integer endpoints between $1$ and $2n$, since $f(a_i)$ is always an integer between $1$ and $2n$. 
This proves condition (i) for $d=1$.

To prove (ii), consider the collection $\J' = \{J_1', \ldots, J_n'\}$ where $J_i'\eqdef [f(a_{2i-1}) - 1/4, f(a_{2i}) + 1/4]$. 
That is, we extend each interval in $\J$ by $1/4$ to the left and to the right.
This operation preserves the relative order of the endpoints of the intervals, except when a left endpoint coincides with a right endpoint, in which case these endpoints become separated with the new left endpoint strictly smaller than the new right endpoint.
The latter change merely widens the atom of the codeword that appeared where this left and right endpoint coincided, and so does not remove or introduce any codewords in the realization.
In particular, $\code(\J') = \code(\J)$.
See \Cref{fig:adjustment} for an example of this.

\begin{figure}[h] \[
\includegraphics{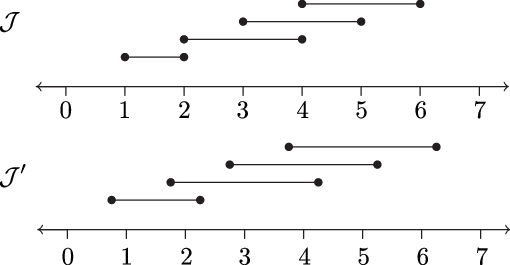}
\]
\caption{Extending the collection of intervals $\J$ to a collection $\J'$ whose closures and interiors both realize the code of $\J$.}
\label{fig:adjustment}
\end{figure}

We claim moreover that the interiors of the intervals in $\J'$ realize the same code as $\J'$.
Let $\J''$ denote the collection of interiors of intervals in $\J'$.
To see that $\code(\J') = \code(\J'')$, first note that $\patt_{\J'}(p) \neq \patt_{\J''}(p)$ only when $p$ is a left or right endpoint of an interval in $\J'$. 
Moreover, there are no points in $\R^1$ that are both a left and right endpoint of some interval in $\J'$. 
This implies that every intersection pattern that occurs in $\J'$ occurs at a point that is not an endpoint of any interval---if a pattern occurs at a left endpoint, it also occurs slightly to the right of that endpoint, and symmetrically for right endpoints.
Hence $\code(\J')\subseteq \code(\J'')$.
A symmetric argument shows that the intersection patterns that occur in $\J''$ also occur away from endpoints of the intervals, implying that $\code(\J'')\subseteq \code(\J')$.
Thus $\code(\J') = \code(\J'')$ as desired, proving condition (ii).

To obtain conditions (i) and (ii) simultaneously, we may simply apply the sorting argument used earlier to $\J'$, obtaining closed intervals with integer endpoints between $1$ and $2n$ and the same relative order of endpoints as in $\J'$.
Since the relative order of endpoints also determines the code of the interiors, this collection will have interiors that realize $\C$ as well. 

A symmetric argument shows that if $\C$ has a realization by open intervals then it has a realization by closed intervals.
Again, we can transform our starting realization so that the open intervals have integer endpoints.
We then shrink each interval by $1/4$ from the left and right, separating left and right endpoints, and noting that this does not affect the code of the intervals.
With left and right endpoints separated, the closures of these intervals will realize the same code as their interiors.
This proves the entirety of the theorem for $d=1$. 

For $d > 1$, note that a collection of $n$-many boxes $\U$ in $\R^d$ can be decomposed into collections $\I_1, \ldots, \I_d$ of intervals, where $\I_j$ consists of the $n$-many intervals obtained by projecting the boxes in $\U$ onto the $j$-th coordinate. 
\Cref{thm:product} implies that $\code(\U) = \code(\I_1)\intprod \cdots \intprod \code(\I_d)$.
Since we have established \Cref{thm:integercoordinates} for $d=1$, we can replace each $\I_j$ by a realization $\J_j$ consisting of intervals with integer endpoints between $1$ and $2n$ whose interiors realize the same code as $\I_j$. 
Then define $\V = \{V_1, \ldots, V_n\}$ where $V_i$ is the Cartesian product of the $i$-th interval from each $\J_j$. 
Again applying \Cref{thm:product} we have $\code(\V) = \code(\J_1)\intprod\cdots \intprod \code(\J_d)$, and since $\code(\J_j) = \code(\I_j)$ for all $j\in[d]$ this proves (i).
The fact that $\code(\J_j)$ is the same as the code of the interiors of the $\J_j$ proves (ii).
Finally, if we start with a collection of open boxes in $\R^d$, we may likewise decompose them into collections of open intervals, then apply \Cref{thm:integercoordinates} for $d=1$ to obtain realizations of the same codes by closed intervals, and finally apply \Cref{thm:product} to combine these collections of closed intervals to obtain a realization of the original code by closed boxes in $\R^d$.
\end{proof}

%%%%%%%%%%%%%%%%%%%%%%%%%%%%%%%%%%%%%%%%%%%%%%%%%%%%%%%%%%%%%%%%%%%%%%%%%%%%%%%%
%%%%%%%%%%%%%%%%%%%%%%%%%%%%%%%%%%%%%%%%%%%%%%%%%%%%%%%%%%%%%%%%%%%%%%%%%%%%%%%%
%%%%%%%%%%%%%%%%%%%%%%%%%%%%%%%%%%%%%%%%%%%%%%%%%%%%%%%%%%%%%%%%%%%%%%%%%%%%%%%%
\section{Weak monotonicity of closed convexity}\label{sec:weak-monotonicity}

Our goal in this section is to prove \Cref{thm:weak-monotonicity}, which provides a weak analog of monotonicity of open convexity (\Cref{thm:monotone}) for closed convex codes. 
The result is ``weak'' in two senses.
First, the new non-maximal codewords that we add to our code must comprise a downward-closed set, instead of being arbitrary. 
This is necessary, as Gambacini, Jeffs, Macdonald, and Shiu~\cite{GJMS} gave an example of a closed convex codes with the property that adding a certain non-maximal codeword yields a code that is not closed convex in any dimension. 
Second, the closed embedding dimension may increase by two in our theorem, whereas the monotonicity of open convexity theorem established by Cruz, Giusti, Itskov, and Kronholm~\cite{CGIK} requires a dimension increase of at most one. 
It would be interesting to find examples of codes where our ``$+2$'' term is necessary, if they exist.

Below, a \emph{downward-closed set} is a set of elements in a poset which contains every element of the poset that is less than some element of the set. 
Also, recall from \Cref{sec:non-monotone} that $\Delta(\C)$ is the abstract simplicial complex generated by $\C$.

\begin{theorem}\label{thm:weak-monotonicity}
Let $\C\subseteq 2^{[n]}$ be a closed convex code.
Let $\D$ be a downward-closed subset of $\Delta(\C)\setminus \C$.
Then $\C\cup\D$ is closed convex, and moreover 
\[
\cdim(\C\cup\D) \le \cdim(\C) + 2. 
\]
\end{theorem}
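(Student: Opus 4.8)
The plan is to reduce the statement to the one-dimensional-augmentation case already handled by Cruz, Giusti, Itskov, and Kronholm \cite{CGIK}, and then exploit the product theorem (\Cref{thm:product}) to ``absorb'' a downward-closed collection of new codewords by intersecting with a cleverly chosen auxiliary code realized in two extra dimensions. The key observation is that intersecting a code with a second code can only \emph{shrink} codewords, and the intersection product of $\C$ with a suitable code $\mathcal{E}$ on $[n]$ should produce exactly $\C\cup\D$ when $\D$ is downward-closed inside $\Delta(\C)$. Concretely, I would look for a closed convex code $\mathcal{E}\subseteq 2^{[n]}$ with $\cdim(\mathcal{E})\le 2$ such that $\C\intprod\mathcal{E} = \C\cup\D$; then \Cref{cor:dimensions} gives $\cdim(\C\cup\D)\le\cdim(\C)+2$, and since a product of closed convex sets is closed convex, $\C\cup\D$ is closed convex.

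**Constructing the auxiliary code.** The candidate for $\mathcal{E}$ is the code whose maximal codewords are $[n]$ together with the maximal elements of $\D$ (viewed inside $\Delta(\C)$), completed downward: that is, $\mathcal{E} \eqdef \{[n]\}\cup\{\sigma : \sigma\subseteq\tau \text{ for some maximal } \tau\in\D\}$. The point of taking $\D$ downward-closed is precisely that this $\mathcal{E}$ then satisfies $\C\intprod\mathcal{E} = \C\cup\D$: every codeword of $\C$ survives by intersecting with $[n]\in\mathcal{E}$; every $\tau\in\D$ is obtained as $\sigma\cap\tau$ where $\sigma\in\C$ is a codeword of $\C$ containing $\tau$ (such $\sigma$ exists because $\D\subseteq\Delta(\C)$, so $\tau$ lies below some codeword of $\C$); and conversely no \emph{new} codeword outside $\C\cup\D$ can appear, because any $\sigma\cap\rho$ with $\sigma\in\C$ and $\rho\in\mathcal{E}$ is either contained in $\sigma\in\C$ (giving an element of $\C$, since $\C$ is a code... wait---$\C$ need not be downward-closed) so this step needs care: one must check that $\sigma\cap\rho$ is either already in $\C$ or lies below a maximal element of $\D$ and hence is in $\D$. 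This verification---that the intersection product of $\C$ with this downward-completed $\mathcal{E}$ introduces nothing beyond $\D$---is where the hypothesis $\D\subseteq\Delta(\C)\setminus\C$ and downward-closedness of $\D$ must both be used, and I expect it to require a short case analysis on whether $\sigma\cap\rho\in\C$ or not.

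**Bounding the embedding dimension of $\mathcal{E}$.** It remains to show $\cdim(\mathcal{E})\le 2$, i.e. that $\mathcal{E}$ is closed convex in the plane. Here I would try two routes. The first: $\mathcal{E}$ has the special shape ``$[n]$ plus a downward-closed set,'' i.e. $\mathcal{E} = \C_0\cup\D_0$ where $\C_0 = \{[n],\emptyset\}$ is trivially convex and $\D_0$ is downward-closed in $\Delta(\C_0) = 2^{[n]}$; so \emph{if} an analogous monotonicity statement already applied, we would be done---but that is circular. The cleaner route: since $\mathcal{E}$'s maximal codewords, other than $[n]$, together with all their subsets form a simplicial complex $\Sigma$, and since adding the single ``universal'' maximal codeword $[n]$ on top of $\Sigma$ is a mild operation, I would directly build a planar closed-convex realization. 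Take a closed convex realization in $\R^2$ of the code whose \emph{only} maximal codeword is $[n]$ and whose other codewords are exactly the faces of $\Sigma$ --- equivalently, thicken a straight-line planar drawing of $\Sigma$ (every simplicial complex of the relevant form embeds), place a large box/disk realizing $[n]$, and carve out the faces of $\Sigma$ as closed convex pieces protruding from it. Because $\Delta(\mathcal{E})$-type obstructions do not arise (there is a single top codeword $[n]$ above everything), Lemma 2.5 of \cite{local15}---already invoked twice in the excerpt---shows any code with a unique maximal codeword is closed convex in $\R^2$, but $\mathcal{E}$ may have \emph{several} maximal codewords (the maximal elements of $\D$, plus $[n]$). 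So the real work is a planarity argument: realize the simplicial complex generated by the maximal elements of $\D$ in the plane, then enlarge by one set to realize $[n]$.

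**Main obstacle.** The hard part will be establishing $\cdim(\mathcal{E})\le 2$ in full generality: a priori the simplicial complex generated by the maximal elements of $\D$ need not be planar or even $2$-representable, and then no planar closed-convex realization of $\mathcal{E}$ exists, which would break the ``$+2$'' bound. I suspect the resolution is that one does \emph{not} need $\mathcal{E}$ itself to be low-dimensional; instead one should choose $\mathcal{E}$ more cleverly so that it \emph{is} realizable in $\R^2$ regardless of $\D$---for instance, realizing $\D$ one maximal codeword at a time using a ``flap'' construction (of the kind appearing in \Cref{thm:flaps} and \Cref{sec:embedding-dimension-gap}), where a single new closed convex region is attached to a planar realization of $[n]$ for each maximal element of $\D$, and these flaps are arranged around the boundary of the region realizing $[n]$ in general position so that the flaps' pairwise intersections (being forced to lie inside the $[n]$-region or be empty) create no unwanted codewords. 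Making this flap arrangement work---ensuring that $k$ flaps placed around a convex region in $\R^2$ produce exactly the intended intersection pattern and nothing more---is the crux, and it is precisely the planar packing argument I would expect to occupy the bulk of the proof.
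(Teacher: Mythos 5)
Your overall strategy is exactly the paper's: choose an auxiliary code $\mathcal E$ on $[n]$ with $\cdim(\mathcal E)\le 2$ and $\C\intprod\mathcal E=\C\cup\D$, then invoke \Cref{cor:dimensions}. Your candidate $\mathcal E$ (the downward closure of $\D$ together with $[n]$) is essentially the paper's choice $\mathcal E=\D\cup\{[n],\emptyset\}$, and the verification of $\C\intprod\mathcal E=\C\cup\D$ that you sketch is the right one: every codeword of $\C$ survives via $[n]\in\mathcal E$; every $\tau\in\D$ equals $\sigma\cap\tau$ for a maximal $\sigma\in\C$ containing it; and any $\sigma\cap\rho$ with $\rho\in\mathcal E\setminus\{[n],\emptyset\}$ is a subset of an element of $\D$, hence lies in $\Delta(\C)$, and is therefore either in $\C$ or (being in $\Delta(\C)\setminus\C$ below an element of the downward-closed set $\D$) in $\D$. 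That "short case analysis" you anticipated is all that is needed there.

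The genuine gap is in the step $\cdim(\mathcal E)\le 2$, where you misdiagnose the situation and then leave the argument unfinished. You claim that $\mathcal E$ "may have several maximal codewords (the maximal elements of $\D$, plus $[n]$)," but this is false: since $[n]\in\mathcal E$ and every other element of $\mathcal E$ is a subset of $[n]$ (indeed a proper subset, as $\D\subseteq\Delta(\C)\setminus\C$ cannot contain $[n]$ unless $[n]\in\C$), the code $\mathcal E$ has the \emph{unique} maximal codeword $[n]$. Hence \cite[Lemma 2.5]{local15}, which you correctly recall gives open and closed convexity in $\R^2$ for codes with a unique maximal codeword, applies directly and finishes the proof in one line --- this is exactly what the paper does. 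Your worry that the simplicial complex generated by the maximal elements of $\D$ might fail to be planar or $2$-representable is beside the point, because one never needs to realize that complex "on its own": the presence of the top codeword $[n]$ is precisely what makes the realization easy. Consequently the entire final "flap packing in the plane" program you propose is unnecessary, and as written it is also not a proof: you flag yourself that the arrangement might not exist, and you give no construction, so as it stands the dimension bound --- the only nontrivial quantitative claim in the theorem --- is not established.
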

\begin{proof}
By \Cref{cor:dimensions}, it suffices to show that $\C\cup\D$ is equal to $\C\intprod \mathcal E$, where $\mathcal E$ is a code with $\cdim(\mathcal E) \le 2$. 
In fact, the choice $\mathcal E \eqdef \D \cup \{[n], \emptyset\}$ will suffice.
Certainly we have $\cdim(\mathcal E)\le 2$ since $\mathcal E$ has a unique maximal codeword (see \cite[Lemma 2.5]{local15}).

To prove that $\C \cup \D = \C\intprod \mathcal E$, we show containment in each direction. 
We have $\C\subseteq \C\intprod \mathcal E$, since $[n]$ is a codeword of $\mathcal E$. 
Moreover, if $\tau\in \D$ then there exists a maximal codeword $\sigma$ of $\C$ with $\tau\subseteq \sigma$. 
We then have $\tau = \sigma \cap \tau$, which proves that $\tau\in \C\intprod \mathcal E$.

For the reverse containment, let $\sigma\in \C$ and $\tau \in \mathcal E$. 
If $\tau = [n]$, then $\sigma\cap \tau = \sigma$, which lies in $\C$ and hence in $\C\cup \D$.
If $\tau = \emptyset$, then $\sigma\cap \tau = \emptyset$, which again lies in $\C$. 
Otherwise, $\tau\in \D$. 
Note that $\C\cup\D$ contains $\Delta(\D)$ since $\D$ is a downward-closed subset of $\Delta(\C)\setminus \C$.
Thus $\C\cup \D$ contains $\sigma\cap \tau$, since $\sigma\cap \tau$ is a subset of $\tau$.
We conclude that $\C\cup \D = \C\intprod \mathcal E$, which proves the result. 
\end{proof}

%%%%%%%%%%%%%%%%%%%%%%%%%%%%%%%%%%%%%%%%%%%%%%%%%%%%%%%%%%%%%%%%%%%%%%%%%%%%%%%%
%%%%%%%%%%%%%%%%%%%%%%%%%%%%%%%%%%%%%%%%%%%%%%%%%%%%%%%%%%%%%%%%%%%%%%%%%%%%%%%%
%%%%%%%%%%%%%%%%%%%%%%%%%%%%%%%%%%%%%%%%%%%%%%%%%%%%%%%%%%%%%%%%%%%%%%%%%%%%%%%%

\section{Conclusion and future questions}

We have developed a useful ``product theorem'' (\Cref{thm:product}) for realizations of codes, and used it to explain several features of box-convex codes, including points of difference compared to ordinary convex codes. 
However, a number of fundamental questions remain open, and we describe several below that may be of interest for future research. 

\begin{enumerate}
    \item For each fixed $d\ge 2$, is there a polynomial-time algorithm to recognize whether or not a given code $\C\subseteq 2^{[n]}$ has a realization by boxes in $\R^d$? 
     Every interval code $\C\subseteq 2^{[n]}$ has at most $2n+1$ codewords (see the proof of \cite[Proposition 1.5]{dimension1}), which together with \Cref{cor:classification} implies that every box-convex code in $\R^d$ has at most $(2n+1)^d = O(n^d)$ codewords.
    Thus it would make sense to measure the complexity of such an algorithm as a function of $n$. 
    Note that \cite{dimension1} has proved that there is an efficient algorithm for recognizing interval codes. 
    \item Among all box-convex codes $\C\subseteq 2^{[n]}$, which ones have $\bdim(\C)$ as large as possible? What is this largest value in terms of $n$?
    \item Can we classify ``local obstructions'' to box-convexity, specializing the notion of local obstructions for convex codes (see \cite{local15})? This would mean classifying which simplicial complexes are the nerve of a collection of boxes whose union is a box, analogous to the ``convex union representable" complexes of Jeffs and Novik \cite{CUR}. 
    \item Can we find further applications of \Cref{thm:product} to general open and closed convex codes?
    \item Curry, Jeffs, Youngs, and Zhao \cite{CJYZ} recently showed that every ``inductively pierced" code has a realization consisting of open or closed balls. 
    Does every inductively pierced code also admit a realization by boxes?
    \item Work of Goldrup and Phillipson \cite{GP-5neurons} classifies all open and closed convex codes on up to five indices. 
    Our work has only enumerated box-convex codes on up to three indices.
    This motivates us to ask: which convex codes on up to four or five indices are not box-convex?
    
\end{enumerate}

%%%%%%%%%%%%%%%%%%%%%%%%%%%%%%%%%%%%%%%%%%%%%%%%%%%%%%%%%%%%%%%%%%%%%%%%%%%%%%%%
%%%%%%%%%%%%%%%%%%%%%%%%%%%%%%%%%%%%%%%%%%%%%%%%%%%%%%%%%%%%%%%%%%%%%%%%%%%%%%%%
%%%%%%%%%%%%%%%%%%%%%%%%%%%%%%%%%%%%%%%%%%%%%%%%%%%%%%%%%%%%%%%%%%%%%%%%%%%%%%%%
\bibliographystyle{plain}
\bibliography{convex_codes.bib}

\section*{Appendix}
This appendix supplements \Cref{sec:small-codes}, in which we showed that the codes $\C_1$, $\C_2$, and $\C_3$ are open and closed convex but not box-convex. 
Here we will argue that, up to permutation of indices, every other open or closed convex code $\C\subseteq 2^{\{1,2,3\}}$ is box-convex. 
We must first enumerate the open and closed convex codes $\C\subseteq 2^{\{1,2,3\}}$ up to symmetry, which we do in \Cref{fig:3indices}, partitioning them according to their maximal codewords.
The fact that this is the full list of open and closed convex codes on three indices follows from the fact that such a code must be max-intersection complete (i.e. contain all possible intersections of maximal codewords) and every max-intersection complete code $\C\subseteq 2^{\{1,2,3\}}$ has an open and closed convex realization (see \cite[Supplementary Text S1]{local15}).
In fact, we will do slightly more than show that these codes are box-convex; we will determine their exact box embedding dimensions. 

\begin{figure}[h]
    \begin{center}
    \begin{tabularx}{0.8\linewidth}{|XXX|}
    \hline 
    \multicolumn{1}{|l|}{$\{\mathbf{1},\emptyset\}$} &  \multicolumn{1}{|l|}{$\{\mathbf{1},\mathbf{2}, \emptyset\}$}&$\{\mathbf{1},\mathbf{2},\mathbf{3},\emptyset\}$\\
    \hline
    $\{\mathbf{12}, \emptyset\}$ & $\{\mathbf{12}, 1, \emptyset\}$ & $\{\mathbf{12}, 1,2,  \emptyset\}$\\\hline
    \multicolumn{1}{|l|}{$\{\mathbf{12},\mathbf{3}, \emptyset\}$}&
    \multicolumn{1}{|l|}{$\{\mathbf{12},\mathbf{13}, 1,  \emptyset\}$}&
    $\star\, \{\mathbf{12}, \mathbf{13}, \mathbf{23}, 1,2,3, \emptyset\}$\\
    \multicolumn{1}{|l|}{$\{\mathbf{12},\mathbf{3}, 1, \emptyset\}$}&
    \multicolumn{1}{|l|}{$\{\mathbf{12},\mathbf{13}, 1, 2, \emptyset\}$}&\\
    \multicolumn{1}{|l|}{$\{\mathbf{12},\mathbf{3}, 1, 2,\emptyset\}$}&
    \multicolumn{1}{|l|}{$\{\mathbf{12},\mathbf{13}, 1, 2, 3, \emptyset\}$}&\\
    \hline 
    $\{\mathbf{123}, \emptyset\}$ & $\{\mathbf{123}, 1, \emptyset\}$ & $\{\mathbf{123}, 1, 2, \emptyset\}$\\
    $\dagger\, \{ \mathbf{123}, 1, 2, 3, \emptyset\}$ & $\{\mathbf{123}, 12, \emptyset\}$ &$\{\mathbf{123}, 12, 1, \emptyset\}$ \\
    $\{\mathbf{123}, 12, 3, \emptyset\}$&$\{\mathbf{123}, 12, 1,2, \emptyset\}$& $\{\mathbf{123}, 12, 1, 3, \emptyset\}$\\
    $\dagger\, \{\mathbf{123}, 12, 1, 2,3, \emptyset\}$ &$\{\mathbf{123}, 12, 13, \emptyset\}$ &$\{\mathbf{123}, 12, 13, 1, \emptyset\}$ \\
    $\{\mathbf{123}, 12, 13, 2, \emptyset\}$ & $\{\mathbf{123}, 12, 13, 1, 2, \emptyset\}$&$\{\mathbf{123}, 12, 13, 2, 3, \emptyset\}$ \\
    $\dagger\, \{\mathbf{123},12, 13, 1,2,3, \emptyset\}$ & $\star\, \{\mathbf{123}, 12, 13, 23, \emptyset\}$& $\star\, \{\mathbf{123}, 12, 13, 23, 1, \emptyset\}$ \\
    $\dagger\, \{\mathbf{123}, 12, 13, 23, 1,2,\emptyset\}$ &  $\dagger\, \{\mathbf{123}, 12, 13, 23, 1,2,3, \emptyset\}$&\\
    \hline
    \end{tabularx}
    \end{center}
    \caption{All open and closed convex codes on the index set $\{1,2,3\}$, up to symmetry. 
    Those marked with $\star$ were shown not to be box-convex in \Cref{sec:small-codes}.
    Those marked with $\dagger$ have box embedding dimension equal to two.
    The remainder can be realized by intervals. }
    \label{fig:3indices}
\end{figure}

% \subsection*{Codes $\C\subseteq 2^{\{1,2,3\}}$ with $\bdim(\C) = 1$.}

% The following codes have box embedding dimension equal to one, i.e. they are interval codes.
% We sort them roughly by how many codewords of degree 2 they have.

%\begin{itemize}
%    \item $\{1,2,3,\emptyset\}$ 
%    \item $\{12,3,\emptyset\},  \{12,1,3,\emptyset\},  \{12,1,2,3,\emptyset\}$
%    \item $\{12,13,1,\emptyset\},  \{12,13,1,2,\emptyset\},  \{12,13,1,2,3,\emptyset\}$
%    \item $\{123,\emptyset\},  \{123,1,\emptyset\},  \{123,1,2,\emptyset\}$
%    \item $\{123,12,\emptyset\},  \{123,12,1,\emptyset\},  \{123,12,1,2,\emptyset\},  \{123,12,3,\emptyset\},  \{123,12,1,3,\emptyset\}$
%    \item $\{123,12,13,\emptyset\},
%        \{123,12,13,1,\emptyset\},
%        \{123,12,13,1,2,\emptyset\},
%        \{123,12,13,2,\emptyset\},
%        \{123,12,13,2,3,\emptyset\}$
%\end{itemize}

%\subsection*{Codes $\C\subseteq 2^{\{1,2,3\}}$ with $\bdim(\C) = 2$.}

We leave it as an exercise to verify that the unmarked codes in \Cref{fig:3indices} can be realized by closed intervals, i.e. have box embedding dimension equal to one.
To prove that the $\dagger$-marked codes have box embedding dimension two, we require the following proposition.

\begin{proposition}\label{prop:bdim1}
Any code $\C \subseteq 2^{\{1,2,3\}}$ that contains the codewords  $123,1,2,$ and $3$ has $\bdim(\C) \ge 2$, i.e. it is not realizable with closed intervals in $\R^1$.
\end{proposition}

\begin{proof}
This follows from \Cref{lem:notintervalcode}, with $\sigma = 123$, $\tau_1 = 1$, $\tau_2 = 2$, and $\tau_3 = 3$. 
    %Suppose for contradiction that $\I = \{I_1,I_2,I_3\}$ is a realization of $\C$ by intervals.
    %Note that $\I^{123}$ must itself be a nonempty interval. 
    %Each interval contains points outside of $\I^{123}$ since we have all possible singleton codewords, and so two of them contain points on the same side of $\I^{123}$.
   % Without loss of generality, suppose that $\I^1$ and $\I^2$ have points to the left of $\I^{123}$. 
    %Again without loss of generality, we may assume that $\I^2$ contains a point strictly to the left of $\I^1$.
    %\[
    %    \includegraphics[scale=0.8]{images/sunflower1.png}
    %\]
    %However, $I_2$ is an interval, so all points between $\I^2$ and $\I^{123}$ must be in $I_2$. 
   % This would imply that $\I^1 \subseteq I_2$, a contradiction. 
\end{proof}

This shows that all $\dagger$-marked codes from \Cref{fig:3indices} have box embedding dimension at least two, except the code $\{\mathbf{123}, 12, 13,23, 1, 2, \emptyset\}$. 
However, \Cref{lem:notintervalcode} likewise takes care of this case, with $\sigma = 123$, $\tau_1 = 12$, $\tau_2 = 13$, and $\tau_3 = 23$. 
Thus it remains to show that all $\dagger$-marked codes have realizations by boxes in $\R^2$, which we do below in \Cref{fig:final}. 

\begin{figure}[h]
    \[
    \includegraphics{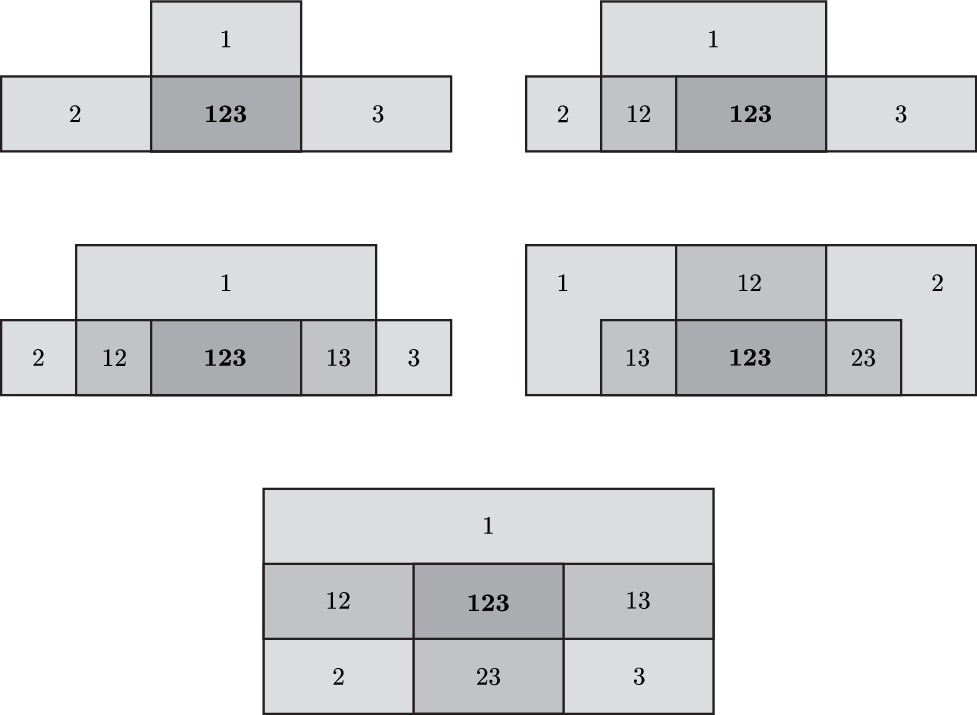}
    \]
    \caption{2-dimensional box realizations of the $\dagger$-marked codes from \Cref{fig:3indices}.}
    \label{fig:final}
\end{figure}
%\begin{itemize}
%    \item  $\{123,1,2,3,\emptyset\}$
%    \item $\{123,12,1,2,3,\emptyset\}$
%    \item $\{123,12,13,1,2,3,\emptyset\}$
%    \item $\{123,12,13,23,1,2,3,\emptyset\}$
%\end{itemize}

%Finally, the code $\{123,12,13,23,1,2,\emptyset\}$ has $\bdim 2$ by a similar argument to \Cref{thm:3neuroncode1}.

 %   \begin{center}
%        \includegraphics[scale=0.5]{images/123d.png}
%    \end{center}
 
 %   \begin{center}
 %       \includegraphics[scale=0.5]{images/123e.png}
 %   \end{center}

%\textbf{4 neurons:}
%\TODO

\end{document}